\title{Versality of Rotation Unfolding of Folding Maps for Surfaces in $\mathbb{R}^3$} %% Assignment Title
\author{Toshizumi Fukui and Atsuki Hiramatsu\\ %% Student name
    \footnotesize
    Department of Mathematics, Saitama University\\
    \footnotesize
    255 Shimo-Okubo, Sakura-ku, Saitama 338-8570, Japan\\
    \footnotesize
    E-mail addresses: \texttt{tfukui@rimath.saitama-u.ac.jp}, \texttt{a.hiramatsu.2357@gmail.com}
}
\date{\today} %% Change "\today" by another date manually
\begin{document}
\setlength{\droptitle}{-5em}    
%% %%%%%%%%%%%%%%%%%%%%%%%%%
\maketitle

\begin{abstract}
    We introduce the rotation unfolding of the folding map of a surface in $\mathbb{R}^3$, and investigate its $\mathcal{A}$-vesality. The rotation unfolding is a 2-parameter unfolding and can be considered as a subfamily of the folding family, which is introduced by Bruce and Wilkinson. They revealed relationships between a bifurcation set of this family and the focal/symmetry set of a surface in $\mathbb{R}^3$. We state the criteria of singularities of the folding map up to codimension 2 and prove when our rotation unfolding is versal. The conditions to be versal are stated in terms of geometry. As a by-product, we show the diffeomorphic type of the locus of the tangent planes of the focal set of regular surfaces, which passes through the origin. 
\end{abstract}

% --------------------------
% Start het
% --------------------------

% ----INTRODUCTION----------------------

\section{Introduction}\label{S:Int}
    
    Let $M$ be a regular surface in $\mathbb{R}^3$, expressed by $z = f(x,y)$. Composing the map $(x,y,z) \mapsto (x,y^2,z)$ with the graph map $(x,y) \mapsto (x,y,f(x,y))$, we obtain the folding map $F:\mathbb{R}^2 \rightarrow \mathbb{R}^3$ of the surface $M$, which is given by 
    $$F(x,y) = (x,y^2,f(x,y)).$$
    
    % Bruce and Wilkinson (\cite{BW}) studied infinitesimal reflectional symmetry in the plane $\Pi$: $y=0$ of surfaces $M$, using folding map $F:\mathbb{R}^2 \rightarrow \mathbb{R}^3$ given by $F(x,y) = (x,y^2,f(x,y))$. This is the composition of the fold $(x,y,z) \mapsto (x,y^2,z)$ with the graph map $(x,y) \mapsto (x,y,f(x,y))$. 
    
    \begin{center}
        \includegraphics{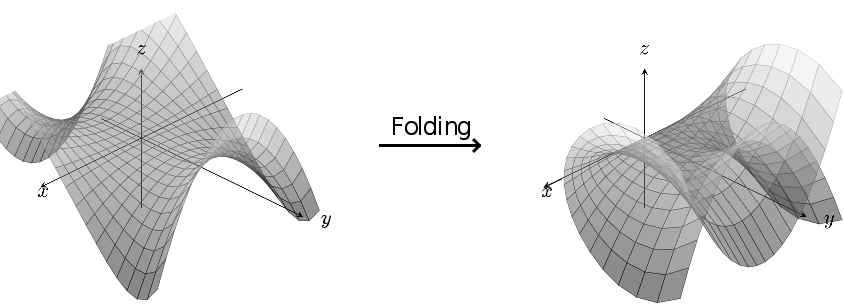}
\end{center}

    It is known that the folding map $F$ has a singularity at $\bm{0} \in \mathbb{R}^2$ whether the folding direction (in this case, $y$-direction) at the origin is in the tangent plane at $\bm{0}$. The singularity of $F$ at $\bm{0}$ is a cross-cap if the $y$-direction is not a principal direction of $M$ at $\bm{0}$. In other word, the singularity of $F$ at $\bm{0}$ is more degenerated than a cross-cap if the $y$-direction is a principal direction of $M$ at $\bm{0}$.

    Bruce and Wilkinson (\cite{BW}) introduced the notion of the folding family, which is obtained by conjugating the fold map $(x,y,z) \mapsto (x,y^2,z)$ by Euclidean motions and restricting this family to the surface $M$. The family is a natural unfolding of the folding map $F$ with respect to Euclidean motions of $\mathbb{R}^3$. Bruce and Wilinson show the classification of singularities of the folding map $F$, and that the folding family is a versal unfolding of $F$, with respect to $\mathcal{A}$-equivalence, for a residual set of embeddings of $M$ in $\mathbb{R}^3$.
     % in the plane $\Pi$: $y=0$ of surfaces $M$, using folding map $F:\mathbb{R}^2 \rightarrow \mathbb{R}^3$ given by $F(x,y) = (x,y^2,f(x,y))$. This is the composition of the fold $(x,y,z) \mapsto (x,y^2,z)$ with the graph map $(x,y) \mapsto (x,y,f(x,y))$. 

    The notion of the folding family is motivated by describing infinitesimal reflectional symmetries of surfaces. Usually, the reflection symmetry of the surface $M$ in the plane $y=0$ means that $f(x,y)=f(x,-y)$ holds. So infinitesimal reflectional symmetries would be something which implies $f_o(x,y) = \frac{1}{2}(f(x,y)-f(x,-y))$ is close to 0 near the origin. Remark that $f_o(x,y)$ is the odd part of $f(x,y)$ with respect to $y$. A formulation for infinitesimal reflectional symmetries is as follows:
    \begin{itemize}
        \item When $f_o(x_0,y_0) = \pardiff{}{x}f_o(x_0,y_0) = \pardiff{}{y}f_o(x_0,y_0) = 0$ holds at a point $(x_0,y_0)$, then $F(x_0,y_0)= F(x_0,-y_0)$ and the image of tangent map of $F$ at $(x_0,y_0)$ and that of $F$ at $(x_0,-y_0)$ coincide. We refer to such a point $F(x_0,y_0)$ as a self-tangency point of $F$. 

        % then there is a sphere which tanget to $M$ at two points $(x_0,y_0), (x_0,-y_0)$. 
        
        \item If $F$ is $\mathcal{A}$-equivalent to $B_2$ singularity, $(x,y) \mapsto (x,y^2,y^5-x^2y)$, at the origin, there is a self-tangency point for several small perturbations of $F$. 
    \end{itemize}

    \begin{figure}[h!]
        \centering
        \includegraphics[scale=0.4]{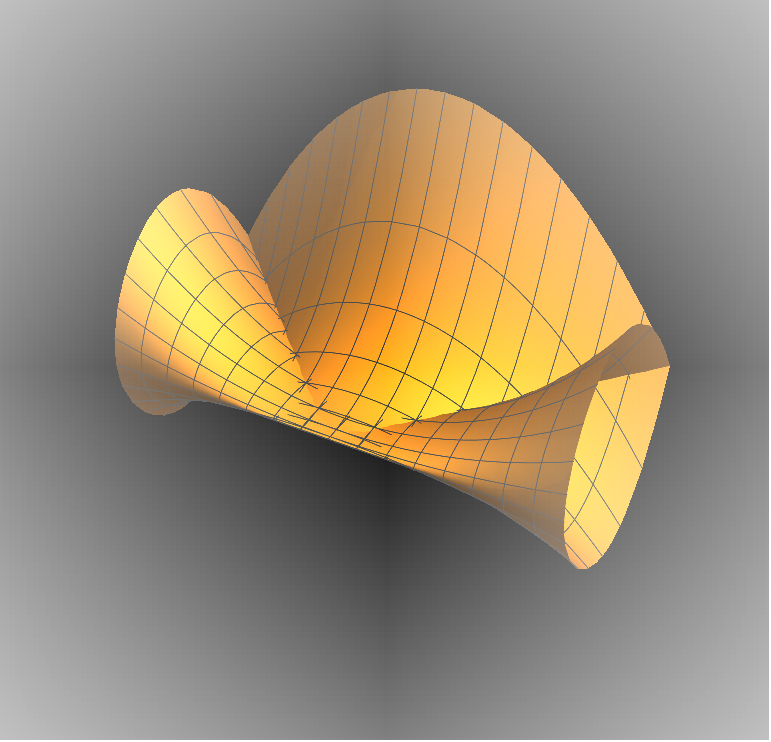}
        \includegraphics[scale=0.4]{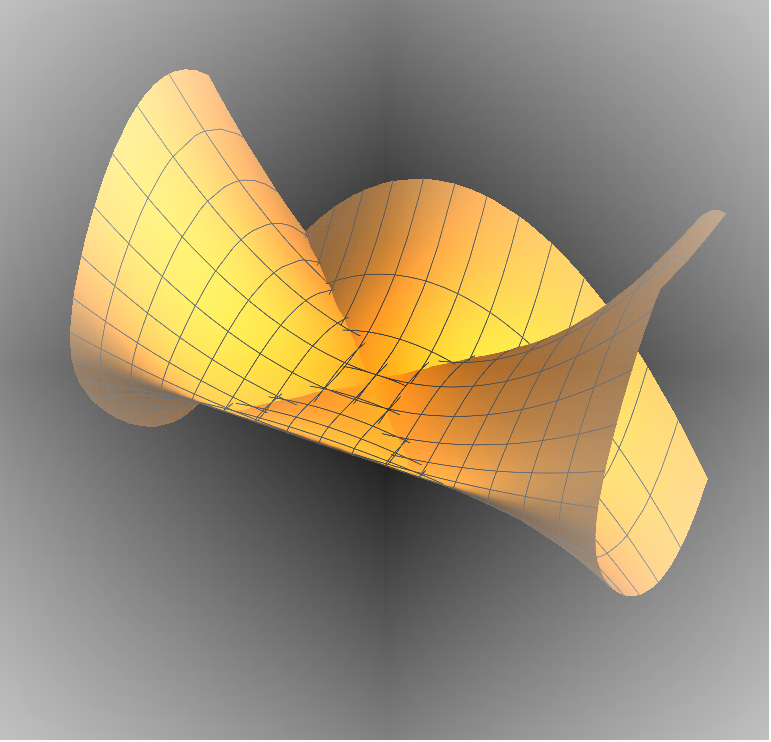}\\
        \vspace{-3.5em}$a=0$ \hspace{12em} $a=0.6$
        \caption{$(x,y^2,y^5-x^2y+a^4y-2a^2y^3)$}
        \label{fig:enter-label}
    \end{figure}
    
    This observation comes from the viewpoint that to investigate singularities of the folding map $F$ is to describe infinitesimally reflectional symmetry of $M$, due to Bruce and Wilkinson (\cite{BW}). They also show that the bifurcation set of the folding family is the dual of the focal and symmetry sets of the surface $M$.
    
    In this paper, we study a natural subfamily of the folding family obtained by restricting Euclidean motions to rotations. We call this family the rotation unfolding of $F$ denoted by $R:\mathbb{R}^2 \times S^2 \rightarrow \mathbb{R}^3$. (See section \ref{S:Def}.)  According to the list of singularities of the folding map $F$ due to Bruce and Wilkinson (\cite{BW}), we expect a generic two-parameter family to have the following singularities.

    % In particular, we discuss versality of the rotation unfolding.

    % The Rotation unfolding is a subfamily of the folding family
    % is obtained from the fold $(x,y,z) \mapsto (x,y^2,z)$ by rotating the reflecting plane $\Pi: y=0$.
    % This is a two-parameter family and we expect that $F$ has the following singularities in the generic context, $S_1, S_2, B_2$. \CR{According to the list of singularities of the folding map $F$ due to Bruce and Wilkinson \cite{BW}, mentioning several geometric meanings (see \cite{BW}), as follows:}
     
    \begin{center}
        \begin{tabular}{ccl}\hline
            Name & Standard form    & Geometric condition \\ \hline
            $S_1$  & $(x,y^2,y^3 \pm x^2y)$  & general smooth point of a focal set  \\
            $S_2$  & $(x,y^2,y^3 \pm x^3y)$   & parabolic smooth point of a focal set   \\
            $B_2$  & $(x,y^2,y^5 \pm x^2y)$   & general cusp point of a focal set\\ \hline
        \end{tabular}
    \end{center}

    For folding map $F$, we show the criteria for these singularities in Proposition.\ref{prop:AC}..

    %  \centerline{
    %     \begin{tabular}{cl}\hline
    %         $\text{Name}$ & $\text{Algebraic condition }$ \\ \hline
    %         Cross-cap($S_0$) & $a_{11} \not = 0$ \\
    %         $S_1$  & $a_{11} = 0, a_{21} \not = 0, a_{03} \not = 0$  \\
    %         $S_2$   & $a_{11} = 0, a_{21} = 0, a_{03} \not = 0, a_{31} \not = 0$ \\
    %         $B_2$   & $a_{11} = 0, a_{21} \not = 0, a_{03} = 0, 3a_{21}a_{05}-5a_{13}^2 \not = 0$  \\ \hline
    %     \end{tabular}
    % }

    %         \begin{tabular}{cl}\hline
    %         $\text{Name}$ & $\text{Algebraic condition /Geometric condition}$ \\ \hline
    %         Cross-cap($S_0$) & $a_{11} \not = 0$ \\
    %          & ---
    %         $S_1$  & $a_{11} = 0, a_{21} \not = 0, a_{03} \not = 0$  \\
    %          & general smooth point of a focal set
    %         $S_2$   & $a_{11} = 0, a_{21} = 0, a_{03} \not = 0, a_{31} \not = 0$ \\
    %          & parabolic smooth point of a focal set 
    %         $B_2$   & $a_{11} = 0, a_{21} \not = 0, a_{03} = 0, 3a_{21}a_{05}-5a_{13}^2 \not = 0$  \\ 
    %          & general cusp point of a focal set \\\hline
    %     \end{tabular}
    % }

    We investigate criteria for the versality of the rotation unfolding. Our main theorem is as follows.

    \begin{thm*}
        Let the Taylor expansion of $f(x,y)$ be $\frac{1}{2} (k_1 x^2 + k_2 y^2) + \sum^m_{i+j \geq 3} \frac{1}{i!j!}a_{ij} x^i y^j + O(m+1)$ where $m$ is an integer $\geq 3$.\\
        (1) If $F$ is $S_0$ or $S_1$ singularity at $\bm{0}$, $R$ is always versal.\\
        (2) If $F$ is $S_2$ singularity at $\bm{0}$, $R$ is versal if and only if the origin is not umbilic.\\ 
        (3) If $F$ is $B_2$ singularity at $\bm{0}$, $R$ is versal if and only if the origin is not umbilic and ridge line transverse to the plane $\Pi$, or $D_4$ type umbilic.
    \end{thm*}

    % Here $O(m+1)$ denotes a term whose absolute value is at most a positive constant multiple of $|(x,y)|^{m+1}$.} Let $R$ be the rotation unfolding of the folding map $F$.

    As a by-product, we conclude the diffeomorphic type of the locus of the tangent planes of the focal set of the surface $M$, which passes through the origin. (\S\ref{S:BofR})
    
    % the locus of tangent planes of the focal set of the surface $M$, which passes through the origin using the bifurcation set of the versal unfolding.

    We will discuss versality of the folding family in the forthcoming paper (\cite{FH}).

    The paper is organized as follows. In $\S$\ref{S:Pre}, we recall some notions and theorems to prove the main theorem. In $\S$\ref{S:Def}, we define the rotation unfolding and give it in an explicit form. In $\S$\ref{S:Ver}, we show the main theorem for algebraic condition. In $\S$ \ref{S:Geo}, we consider geometric meaning for this conditions.
    
    The authors are grateful to Farid Tari since a discussion with him motivates the paper.

% ----TECHNIQUES---------------------

\section{Preliminaries}\label{S:Pre}

    \subsection{Geometry of surfaces}

        First, we recall the notion of a ridge line and a subparabolic line (see \cite{Geo}). 
        % These notions appear as conditions for rotation unfolding to be versal.

        \begin{dfn}     % ridge & subparabolic
            Let $p$ be non umbilical point of a regular surface with principal vectors $\bm{v}_1,\bm{v}_2$, and the corresponding principal curvatures $\kappa_1, \kappa_2$, which are defined near $p$. \\
            (1) We say that the point $p$ is a $\bm{v}_i$-ridge point if $\bm{v}_i \kappa _i(p) = 0$, where $\bm{v}_i \kappa _i$ is the directional derivative of $\kappa_i$ in $\bm{v}_i$. The closure of the set of $\bm{v}_i$-ridge points is called a $\bm{v}_i$-ridge line if it is of one-dimensional.\\
            (2) We say that the point $p$ is a $\bm{v}_i$-subparabolic point if $\bm{v}_i \kappa _j(p) = 0 \ (i \not = j)$. The closure of the set of $\bm{v}_i$-subparabolic points is called a $\bm{v}_i$-subparabolic line if it is of one-dimensional.\\           
        \end{dfn}

        \begin{lem}[\cite{F}, Lemma 2.1]     % condition of ridge & subparabolic
            Let $g(u,v) = (u,v,f(u,v))$ denote a surface where 
                $$f(u,v) = \frac{1}{2} (k_1 u^2 + k_2 v^2) + \sum^m_{i+j \geq 3} \frac{1}{i!j!}a_{ij} u^i v^j + O(m+1),$$
            $m$ is an integer $\geq 3$, and $g(0)$ be not umbilic, i.e., $k_1 \not = k_2$.\\
            % with principal vector $\bm{v}_1,\bm{v}_2$, and the corresponding principal curvatures $\kappa_1, \kappa_2$, respectively.\\
            (1) The origin is $\bm{v}_2$-ridge if and only if $a_{03} = 0$.\\
            (2) The origin is $\bm{v}_2$-subparabolic if and only if $a_{21} = 0$.
        \end{lem}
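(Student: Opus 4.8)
The plan is to reduce both statements to a direct computation of the directional derivative $\bm{v}_2\kappa_i(\bm{0})$ from the Monge form. Since the quadratic part of $f$ is diagonal with no $uv$-term, the first fundamental form at the origin is the identity and the second fundamental form is $\mathrm{diag}(k_1,k_2)$; hence the coordinate directions $\partial_u,\partial_v$ are principal at $\bm{0}$ with $\kappa_1(\bm{0})=k_1$, $\kappa_2(\bm{0})=k_2$, and in particular $\bm{v}_2=\partial_v$ there (its unit length being automatic since $G(\bm{0})=1$). Because $\bm{0}$ is non-umbilic, each $\kappa_i$ is a smooth function near $\bm{0}$, so $\bm{v}_2\kappa_j(\bm{0})$ is, up to a positive scalar, just the $\partial_v$-derivative, and it suffices to compute $\partial_v\kappa_1(\bm{0})$ and $\partial_v\kappa_2(\bm{0})$.

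Rather than differentiate the square-root formula for the eigenvalues, I would differentiate the characteristic equation implicitly. Writing $P=EG-F^2$, $Q=EN-2FM+GL$, and $R=LN-M^2$, each principal curvature satisfies $P\kappa^2-Q\kappa+R=0$, and differentiating in $v$ gives
$$\partial_v\kappa=\frac{-P_v\kappa^2+Q_v\kappa-R_v}{2P\kappa-Q}.$$
The denominator at $\bm{0}$ equals $2\kappa-(k_1+k_2)$, which is $k_2-k_1$ for $\kappa=k_2$ and $k_1-k_2$ for $\kappa=k_1$; the non-umbilic hypothesis $k_1\ne k_2$ guarantees it is nonzero, which is exactly the smoothness of $\kappa_i$ reappearing.

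The remaining step is to evaluate the $v$-derivatives of the fundamental-form coefficients at $\bm{0}$. Because $f_u(\bm{0})=f_v(\bm{0})=0$, all first-order data of the first fundamental form dies: one checks that $E_v,F_v,G_v$ and hence $P_v$ vanish at $\bm{0}$. Setting $W=\sqrt{1+f_u^2+f_v^2}$, so that $W(\bm{0})=1$ and $W_v(\bm{0})=0$, one obtains $L_v(\bm{0})=a_{21}$, $M_v(\bm{0})=a_{12}$, $N_v(\bm{0})=a_{03}$, using $f_{uuv}(\bm{0})=a_{21}$, $f_{uvv}(\bm{0})=a_{12}$, $f_{vvv}(\bm{0})=a_{03}$. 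This yields $Q_v(\bm{0})=a_{21}+a_{03}$ and $R_v(\bm{0})=k_2a_{21}+k_1a_{03}$.

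Substituting into the formula, the numerator for $\kappa=k_2$ is $(a_{21}+a_{03})k_2-(k_2a_{21}+k_1a_{03})=a_{03}(k_2-k_1)$, so $\partial_v\kappa_2(\bm{0})=a_{03}$; for $\kappa=k_1$ the numerator is $(a_{21}+a_{03})k_1-(k_2a_{21}+k_1a_{03})=a_{21}(k_1-k_2)$, so $\partial_v\kappa_1(\bm{0})=a_{21}$. These vanish precisely when $a_{03}=0$ and $a_{21}=0$, giving (1) and (2). The computation itself is routine; the crux, and the only place the hypothesis is used, is the clean cancellation of the factor $k_2-k_1$ between numerator and denominator, which is what makes the final answers independent of the principal curvatures. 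Careful bookkeeping of which terms survive at $\bm{0}$ is the part that must be done attentively.
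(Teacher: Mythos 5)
Your proof is correct, and all of its computations check out: $P_v(\bm{0})=0$, $Q_v(\bm{0})=a_{21}+a_{03}$, $R_v(\bm{0})=k_2a_{21}+k_1a_{03}$, and the quotients indeed collapse to $\partial_v\kappa_2(\bm{0})=a_{03}$ and $\partial_v\kappa_1(\bm{0})=a_{21}$, which under the paper's definitions of $\bm{v}_2$-ridge and $\bm{v}_2$-subparabolic give exactly statements (1) and (2). The route, however, differs from the paper's in a meaningful way. The paper does not prove the lemma directly; it cites Lemma 2.1 of the reference, whose method (as reflected in the paper's Lemma \ref{prop:ridge}) is to compute the full first-order asymptotic expansions of the functions $\bm{v}_2\kappa_2(u,v)$ and $\bm{v}_2\kappa_1(u,v)$ near the origin --- which requires expanding the principal curvature functions \emph{and} the principal direction field $\bm{v}_2$ away from $\bm{0}$ --- and then read off the constant terms. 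You instead evaluate only at the origin, by implicit differentiation of the characteristic equation $P\kappa^2-Q\kappa+R=0$; this avoids square-root eigenvalue formulas and never needs $\bm{v}_2$ anywhere but at the single point, where it is $\partial_v$ on the nose. Your argument is therefore leaner and self-contained, with the non-umbilic hypothesis entering only through the nonvanishing of the denominator $2P\kappa-Q$ at $\bm{0}$. What it buys less of is information: the linear terms of $\bm{v}_2\kappa_2$ and $\bm{v}_2\kappa_1$ recorded in Lemma \ref{prop:ridge} are not by-products of your computation, and the paper genuinely needs them later (the coefficient of $v$ in $\bm{v}_2\kappa_2$ is what encodes transversality of the ridge line to the plane $\Pi$ in the geometric interpretation of versality for the $B_2$ case), so your shortcut proves this lemma but could not replace the cited computation in the paper's overall architecture.
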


        The asymptotic expansions of $\bm{v}_2 \kappa_2(u,v), \bm{v}_2 \kappa_1(u,v)$, appeared in the proof of the lemma above, are important, and we state them next as a lemma.

        \begin{lem} \label{prop:ridge} % ridge line & subparabolic line
            % Let $g(u,v) = (u,v,f(u,v))$ denote a surface where $f(u,v) = \frac{1}{2} (k_1 u^2 + k_2 v^2) + \sum^m_{i+j \geq 3} \frac{1}{i!j!}a_{ij} u^i v^j + O(m+1)$ where $m$ is an integer $\geq 3$ and not umbilic i.e. $k_1 \not = k_2$.
            % (Trankated site okeku)
            $$
            \begin{array}{rccl}
                (1) & \bm{v}_2 \kappa_2(u,v) &= & a_{03} + \frac{1}{k_2-k_1} \{ 3a_{21}a_{12} + a_{13}(k_2-k_1)\}u \\
                 & & & \ \ \ \ \ \ \ \ \ \ \ \ \ \ \ + \frac{1}{k_2-k_1} \{ 3a_{12}^2 + (a_{04}-3k_2^3)(k_2-k_1)\}v + O(2)\\
                (2) & \bm{v}_2 \kappa_1(u,v) &= & a_{21} + \frac{1}{k_1-k_2} \{ a_{21}(a_{12}-a_{30}) + a_{31}(k_1-k_2)\}u \\
                 & & & \ \ \ \ \ \ \ \ \ \ \ \ \ \ \ + \frac{1}{k_1-k_2} \{ a_{12}(2a_{12}-a_{30}) + (a_{22}-k_1k_2^2)(k_1-k_2)\}v + O(2)
            \end{array}
            $$
        \end{lem}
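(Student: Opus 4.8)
The plan is to compute everything directly from the Monge form $g(u,v)=(u,v,f(u,v))$, since the two expansions are exactly what drops out of solving the eigenvalue problem for the shape operator to second order. First I would record the Taylor coefficients of the fundamental forms. Writing $W=\sqrt{1+f_u^2+f_v^2}$, one has $E=1+f_u^2$, $F=f_uf_v$, $G=1+f_v^2$ and $L=f_{uu}/W$, $M=f_{uv}/W$, $N=f_{vv}/W$; since $f_u,f_v$ vanish to first order at $\bm 0$, we have $F=k_1k_2uv+O(3)$ and $E=G=1+O(2)$, so the metric is the identity plus $O(2)$ and $W=1+O(2)$. I would expand $E,F,G,L,M,N$ as power series in $(u,v)$ keeping all terms through second order, so that $f_{uu},f_{uv},f_{vv}$ are needed through their quadratic parts. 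This is where the quartic coefficients $a_{31},a_{22},a_{13},a_{04}$ of $f$ enter, and the $O(2)$ corrections of $W$ and of $E,F,G$ are precisely what will produce the purely geometric terms $3k_2^3$ and $k_1k_2^2$ in the final formulas.

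Next I would obtain the principal curvatures as the roots of the characteristic equation $(EG-F^2)\kappa^2-(EN-2FM+GL)\kappa+(LN-M^2)=0$. Because $g(\bm 0)$ is not umbilic the two roots are distinct at the origin, so by the implicit function theorem $\kappa_1,\kappa_2$ are smooth near $\bm 0$ with $\kappa_1(\bm 0)=k_1$ and $\kappa_2(\bm 0)=k_2$; I would expand each to second order by inserting an ansatz with a linear and a quadratic correction and matching coefficients order by order. In parallel I would compute the principal direction field $\bm v_2$, the eigenvector of the shape operator $I^{-1}II$ belonging to $\kappa_2$, normalised to unit length in the metric. Writing $\bm v_2=p\,\partial_u+q\,\partial_v$, the normalisation $Ep^2+2Fpq+Gq^2=1$ together with $p(\bm 0)=0$ and $E,F,G=I+O(2)$ forces $q=1+O(2)$, so only the linear part of the $\partial_u$-component $p$ is needed. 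Solving the eigenvector equation to first order yields $p$, and the factor $1/(k_2-k_1)$ (respectively $1/(k_1-k_2)$) appears here as the spectral gap in the eigenvector perturbation, which explains the denominators in the statement.

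Finally I would assemble the directional derivatives through $\bm v_2\kappa_i=p\,\partial_u\kappa_i+q\,\partial_v\kappa_i$. At the origin this reduces to $\partial_v\kappa_i(\bm 0)$, giving the constant terms $a_{03}$ and $a_{21}$ and recovering Lemma~2.1. For the linear terms the contributions are of exactly two kinds: the linear part of $\partial_v\kappa_i$ (coming from the quadratic part of $\kappa_i$) carried by $q=1+O(2)$, and the product of the linear part of $p$ with $\partial_u\kappa_i(\bm 0)$. Matching the $u$- and $v$-coefficients of the sum then yields the two displayed expansions.

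The main obstacle is bookkeeping, but genuinely delicate: the linear coefficients of $\bm v_2\kappa_i$ are second-order data, so every quantity must be carried consistently to second order, and the cancellations that isolate the clean combinations $3a_{21}a_{12}$, $3a_{12}^2$, $a_{21}(a_{12}-a_{30})$, and $a_{12}(2a_{12}-a_{30})$ mix the quartic Taylor coefficients of $f$ with the quadratic corrections from $W$ and the first fundamental form. Retaining these $O(2)$ normalisation corrections, which are easy to discard inadvertently, is exactly what produces the geometric terms $3k_2^3$ and $k_1k_2^2$; so the principal care goes into not truncating $W,E,F,G$ prematurely and into normalising $\bm v_2$ correctly.
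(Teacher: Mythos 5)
Your plan is sound, and it is essentially the paper's own proof: the paper gives no argument for this lemma, deferring to the computation in the proof of Lemma 2.1 of the cited Fukui--Hasegawa parallel-surfaces paper, and that computation is exactly what you outline (Monge-form fundamental forms through second order, principal curvatures from the characteristic equation, the field $\bm{v}_2$ from first-order eigenvector perturbation, then $\bm{v}_2\kappa_i=p\,\partial_u\kappa_i+q\,\partial_v\kappa_i$ with precisely the two contributions you isolate).

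One substantive warning before you execute it: carried out correctly, your plan reproduces formula (1) verbatim, but it does \emph{not} reproduce the printed $u$-coefficient of formula (2); it yields
\[
\frac{1}{k_1-k_2}\bigl\{a_{21}(2a_{12}-a_{30})+a_{31}(k_1-k_2)\bigr\},
\]
i.e.\ $2a_{12}$ where the statement has $a_{12}$. The structure of your own argument shows why: in every one of the four linear coefficients, the second-order eigenvalue correction (the product $S_{12}S_{21}$ of the off-diagonal shape-operator entries over the spectral gap) contributes $2a_{21}a_{12}$ to a $u$-coefficient and $2a_{12}^2$ to a $v$-coefficient, while the eigenvector tilt contributes $a_{21}\,\partial_u\kappa_i(\bm{0})$ and $a_{12}\,\partial_u\kappa_i(\bm{0})$ respectively; since $\partial_u\kappa_1(\bm{0})=a_{30}$, the $u$- and $v$-coefficients of (2) must carry the same factor $(2a_{12}-a_{30})$, exactly as the printed $v$-coefficient does. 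A concrete check: for $f=u^2+\tfrac{1}{2}v^2+\tfrac{1}{2}u^2v+\tfrac{1}{2}uv^2$ (so $k_1=2$, $k_2=1$, $a_{21}=a_{12}=1$, all other $a_{ij}=0$) one finds $\kappa_1=2+v-11u^2+2uv+O(3)$ and $\bm{v}_2=\bigl(-(u+v)+O(2)\bigr)\partial_u+\bigl(1+O(2)\bigr)\partial_v$, hence $\bm{v}_2\kappa_1=1+2u+O(2)$, whereas the printed formula predicts $1+u+O(2)$. So the mismatch you will encounter is a typo in the statement (a dropped factor of $2$), not a defect of your method; it is also harmless for the rest of the paper, whose arguments only use part (1).
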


        If the origin is an umbilic point, then we are not able to define ridge and subparabolic as above.
        % it is neither ridge nor subparabolic.
        That is why we have to consider the geometrical conditions separately when $g(o)$ is umbilic, i.e., $k_1 = k_2 =: k$. We express $g$ using complex coordinates $z=x+iy$, and obtain the following:
        
        $$f(z,\bar{z}) = \frac{k}{2} z\bar{z} + c(z) + O(4), \text{ where } c(z)=\frac{1}{6}\text{Re}(\alpha z^3 + 3\beta z^2\overline{z}).$$

        For generic umbilics, the following properties are known.

        \begin{prop}[\cite{BW}, \cite{CP}]\label{prop:umb}   % classification of umbilic
            We assume that $g(0)$ is umbilic of a regular surface $g$.
            
                \centering
                    \begin{tabular}{cll} \hline
                        &  & Conditions\\ \hline
                        \ronumi & The umbilic is symmetric or unsymmetric in the inner of the \ronumii.   & $3|\beta| \not=|\alpha|$\\
                        & \ \ \ (The $c(z)$ dose not have orthogonal roots.)\\
                        
                        \ronumii & The umbilic is of type $D_4$.      & $3\beta \not= -(2\alpha e^{2i\theta} + \overline{\alpha} e^{-4i\theta})$  \\
                        & \ \ \ (The discriminant of $c(z)$ is not 0.)  & ($0 \leq \theta \leq 2\pi$)\\
                        
                        \ronumiii & The umbilic is star, monster, or lemon. & $|\beta | \not= |\alpha|$\\
                        & \ \ \ ($x^2+y^2$, $c_x$, $c_y$ are linearly independent.) \\
                        
                        \ronumiv & There are one or three limiting principal directions. & $\beta \not= -(2 e^{2i\theta} + e^{-4i\theta})$\\
                        & \ \ \ (The discriminant of $i(zc_z - \overline{z}c_{\overline{z}})$ is not 0.)   & ($0 \leq \theta \leq 2\pi$)\\
                        
                        \ronumv & The origin is not in a subparabolic line and a ridge line. & $\text{arg}(\alpha) \not= \text{arg}(\beta)$\\ 
                        & \ \ \ ($\ c_z=c_{\bar{z}} = 0$ does not hold)      \\\hline
                    \end{tabular}

%The cubic form has orthogonal roots
            
            \begin{figure}[h!]
                \centering
                \includegraphics{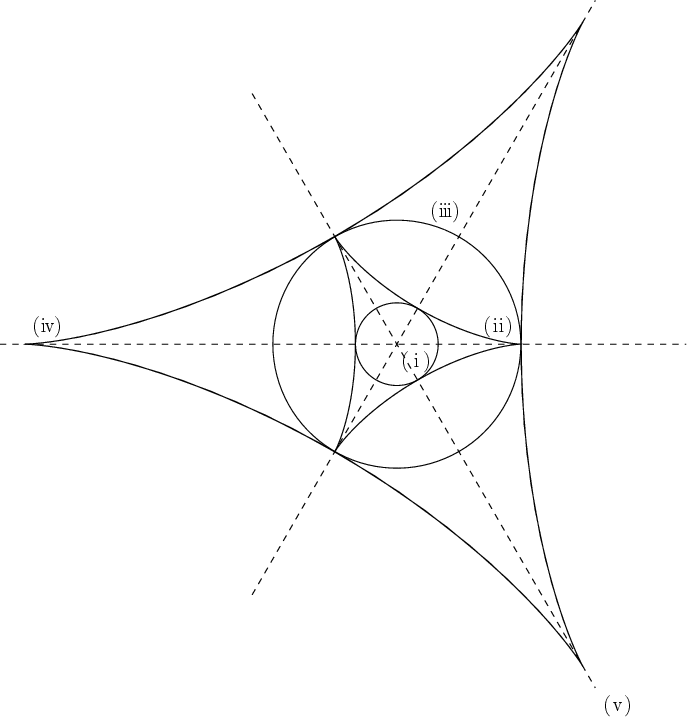}
                \caption{Illustration of the umbilic conditions in $\beta$-plane $\alpha=1$}
            \end{figure}
        \end{prop}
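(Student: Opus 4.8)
The plan is to reduce each of the five dichotomies to the (non)vanishing of an explicit algebraic invariant attached to the cubic $c(z)=\frac16\mathrm{Re}(\alpha z^3+3\beta z^2\bar z)$, computed in the complex coordinate $z=x+iy$. First I would record the derived objects I will reuse: from $\partial_z=\frac12(\partial_x-i\partial_y)$ one gets $c_z=\frac14(\alpha z^2+2\beta z\bar z+\bar\beta\bar z^2)$, the deviation-from-umbilicity form $q:=2c_{zz}=\alpha z+\beta\bar z$ (so that $\frac12(c_{xx}-c_{yy})-ic_{xy}=q$), and the real cubic $P:=i(zc_z-\bar z c_{\bar z})=-\frac12\mathrm{Im}(q z^2)$. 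I would also note that a rotation $z\mapsto e^{i\psi}z$ sends $(\alpha,\beta)\mapsto(\alpha e^{3i\psi},\beta e^{i\psi})$ and an overall scaling fixes $|\alpha|$, so one may normalise $\alpha=1$; this is the slice in which the $\beta$-plane figure is drawn and where the $\alpha$-free parametrisations of (ii) and (iv) are to be read.

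For (iii) the statement is purely algebraic, and I would prove it by a single determinant. In the basis $(x^2,xy,y^2)$ of quadratic forms, the coordinate vectors of $x^2+y^2$, $c_x$ and $c_y$ are read off from the real-and-imaginary expansion of $c$, and a direct $3\times 3$ evaluation gives $\det=|\beta|^2-|\alpha|^2$; hence the three forms are independent exactly when $|\beta|\neq|\alpha|$, which is the Darbouxian (lemon/monstar/star) condition. For (i) I would pass to the angular form $c|_{r=1}=\frac16[\,|\alpha|\cos(3\phi+\arg\alpha)+3|\beta|\cos(\phi+\arg\beta)\,]$ and impose that two roots $\phi_0$ and $\phi_0+\frac\pi2$ both occur; the two resulting equations, squared and added, collapse all angular data and leave precisely $|\alpha|=3|\beta|$, so orthogonal roots occur iff $3|\beta|=|\alpha|$.

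Conditions (ii) and (iv) are the two discriminant computations, and I would run them in parallel. A homogeneous real cubic has a repeated root in direction $e^{i\theta}$ iff its $z$-derivative vanishes there: for (ii) this is $c_z(e^{i\theta})=0$, i.e. $\alpha e^{2i\theta}+2\beta+\bar\beta e^{-2i\theta}=0$, and for (iv) it is $P_z(e^{i\theta})=0$, i.e. $3\alpha e^{2i\theta}+2\beta-\bar\beta e^{-2i\theta}=0$. In each case I would adjoin the complex-conjugate equation and solve the resulting linear system in $(\beta,\bar\beta)$ by Cramer's rule; the coefficient determinant is $4-1=3\neq 0$, so there is a unique $\beta$ for each $\theta$, tracing the discriminant locus. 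The (iv)-solve gives the curve $\beta=-(2\alpha e^{2i\theta}+\bar\alpha e^{-4i\theta})$, which becomes $\beta=-(2e^{2i\theta}+e^{-4i\theta})$ after $\alpha=1$, and the analogous solve gives the (ii)-curve; the umbilic is $D_4$ exactly off the (ii)-curve, while the number of limiting principal directions is $1$ or $3$ (lemon versus star/monstar) exactly off the (iv)-curve, since the radial separatrices of the principal foliation at the umbilic are the real roots of $P$.

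The conceptual heart, and the step I expect to cost the most, is the geometric dictionary rather than any single evaluation: one must justify that the lines-of-curvature binary differential equation at the umbilic is governed to leading order by $q=\alpha z+\beta\bar z$, that its radial separatrices are the roots of $P$, and — for (v) — that the ridge and subparabolic half-branches issuing from the umbilic are cut out by the parts attached to $c_z,c_{\bar z}$. Granting this, (v) follows by showing the origin lies simultaneously on a ridge and a subparabolic line exactly when these branches align, which after $\alpha=1$ is the condition $\mathrm{Im}\,\beta=0$, equivalently $\arg\alpha=\arg\beta$. I would cross-check this against the symmetry heuristic that $\alpha,\beta$ real makes $c(x,-y)=c(x,y)$, so the umbilic has a mirror axis that is forced to be both a ridge and a subparabolic line. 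I expect (v) to be the most delicate item precisely because, unlike the others, it is an alignment (parallelism) condition bilinear in $(\alpha,\beta)$ rather than the vanishing of a discriminant of one cubic. Each genericity assertion in the proposition is then the complement of its locus, completing the proof.
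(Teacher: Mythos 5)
You should know at the outset that the paper itself contains no proof of this proposition: it is stated as a quotation from \cite{BW} and \cite{CP}, so there is no internal argument to compare yours against, and your proposal has to stand on its own. For items (i)--(iv) it essentially does. I checked the computations: the $3\times 3$ determinant of $x^2+y^2,c_x,c_y$ in the basis $(x^2,xy,y^2)$ is indeed $|\beta|^2-|\alpha|^2$ up to a positive constant; your squaring trick for (i) is correct in both directions (given $|\alpha|=3|\beta|$ one solves $3\phi_0+\arg\alpha=\pi-\phi_0-\arg\beta$ for $\phi_0$); the Euler-relation criterion is valid, including the point that a repeated root of a real binary cubic is automatically real, so scanning $\theta$ real captures the whole discriminant locus; the Cramer determinant is $3$; and your (iv)-solve reproduces the printed curve $\beta=-(2\alpha e^{2i\theta}+\bar\alpha e^{-4i\theta})$ exactly. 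One omission matters more than you realize: you write only that ``the analogous solve gives the (ii)-curve''. Carried out, it gives $3\beta=-2\alpha e^{2i\theta}+\bar\alpha e^{-4i\theta}$, which is \emph{not} the printed $3\beta=-(2\alpha e^{2i\theta}+\bar\alpha e^{-4i\theta})$; since a deltoid is not centrally symmetric, the two loci genuinely differ. Your version is the correct one: for $\alpha=1$, $\beta$ real, $c\propto x\{(1+3\beta)x^2+3(\beta-1)y^2\}$ is degenerate exactly at $\beta=1,-1/3$, which lie on your curve, whereas the printed curve meets the real axis at $-1,1/3$; the paper's own resultant $\mathrm{Res}(a_{12},a_{03})$ in \S\ref{S:Geo}, whose real zeros on $\alpha=1$ are again $\beta=1,-1/3$, corroborates this. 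So your method detects a sign slip in the printed table --- but only if you actually write the formula out.

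The genuine gap is (v), and it is larger than the ``delicate alignment'' you anticipate. First, the entire item rests on the geometric dictionary you explicitly grant yourself. For (i)--(iv) it is fair to take the parenthetical algebraic reformulations as the definitions of the conditions, but for (v) the parenthetical cannot be read literally: at a real point $c_{\bar z}=\overline{c_z}$ automatically, so ``$c_z=c_{\bar z}=0$ has a nonzero real solution'' collapses to ``$c_z$ has a real root'', which is precisely the repeated-root locus of (ii), not a new condition. Hence a proof of (v) requires the actual characterization, from \cite{BW}, \cite{CP} (or Morris), of the tangent directions of the ridge and subparabolic branches emanating from an umbilic; you assume it, and it is the whole content of the item. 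Second, your concluding step is wrong as stated: with $\alpha=1$, ``$\mathrm{Im}\,\beta=0$'' is a line while ``$\arg\alpha=\arg\beta$'' is a ray, so your claimed equivalence fails on its face. Worse, no single ray or line can be the degeneracy locus at all: the rotation $z\mapsto e^{2\pi i/3}z$ fixes $\alpha$ and sends $\beta\mapsto e^{2\pi i/3}\beta$, so any rotation-invariant geometric degeneracy must trace a locus in the $\beta$-plane invariant under this rotation --- as the circles and deltoids of (i)--(iv) all are --- and a ray is not. Indeed your own mirror-symmetry cross-check, done carefully with the transformation law $(\alpha,\beta)\mapsto(\alpha w^3,\beta w)$ under $z\mapsto wz$, shows a mirror axis exists iff $\arg\alpha\equiv 3\arg\beta \pmod{\pi}$, i.e.\ $\mathrm{Im}(\bar\alpha\beta^3)=0$, a duly invariant union of six rays --- not $\mathrm{Im}\,\beta=0$ and not $\arg\alpha=\arg\beta$. (This also signals that the printed condition in the table is itself loose shorthand.) So for (v) you have neither established the dictionary nor derived a condition consistent with the statement or with the forced symmetry; this part of the proposal does not go through without substantial input from the cited literature.
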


                % \begin{tikzpicture}[scale=0.7]
                %     \path[draw,samples=100,domain=0:6.28,variable=\t] plot ({2*cos(2*\t r) + cos(4*\t r)},{2*sin(2*\t r)-sin(4*\t r)})
                %     node[above left] {\ronumii}; % (ii)

                %     \path[draw,samples=100,domain=0:6.28,variable=\t] plot ({3*cos((\t + 3*pi/8) r)},{3*sin((\t + 3*pi/8) r)})
                %     node[above]{\ronumiii}; % (iii)

                %     \path[draw,samples=100,domain=0:6.28,variable=\t] plot ({-6*cos(2*\t r) -3*cos(4*\t r)},{-6*sin(2*\t r)+3*sin(4*\t r)})
                %     node[above right] {\ronumiv}; % (iv)

                %     \path[draw,samples=100,domain=0:6.28,variable=\t] plot ({cos(\t r)},{sin(\t r)})
                %     node[below left] {\ronumi}; % (i)

                %     \path[draw,dashed,samples=100,domain=-3.5:4.8,variable=\t] plot ({\t},{\t * tan(pi /3 r)}); % (v)
                %     \path[draw,dashed,samples=100,domain=-9.6:7,variable=\t] plot ({\t},{0});
                %     \path[draw,dashed,samples=100,domain=-3.5:4.8,variable=\t] plot ({\t},{\t * tan(2*pi /3 r)})
                %     node[below right] {\ronumv};
                % \end{tikzpicture}

    \subsection{The finite determinacy and versality}
        In this section, we recall Mather's theory on versal unfolding (see \cite{Mtext}).
        % This is followed by an explanation of the versality of unfolding.
        Let $f:\mathbb{R}^n,\bm{0} \rightarrow \mathbb{R}^p,\bm{0}$ be a $C^\infty$-germ, $\theta_k$ be the set of germs of $C^\infty$-sections $\mathbb{R}^k, \bm{0}  \rightarrow T\mathbb{R}^k$, and $\theta(f)$ be the set of vector fields along $f$. We consider two important maps $tf:\theta_n \rightarrow \theta(f)$ and $\omega f:\theta_p \rightarrow \theta(f)$, which are defined by $\xi \mapsto df\circ\xi$ and $ \eta \mapsto \eta\circ f$, respectively. And let $T\mathcal{A}_e f = tf(\theta_n) + \omega f(\theta_p)$.

        % There are two ways such as through $T\mathbb{R}^n$ and through $\mathbb{R}^p$ in the vector field $\theta(f)$. We denote them as $tf:\theta_n \ni \xi \mapsto df\circ\xi \in \theta(f)$ and $\omega f:\theta_p \ni \eta \mapsto \eta\circ f \in \theta(f)$, respectively, and let $T\mathcal{A}_e f = tf(\theta_n) + \omega f(\theta_p)$. Then, codimension and determinacy are defined by the following.

        \begin{dfn}
            Let $f:\mathbb{R}^n,\bm{0}  \rightarrow \mathbb{R}^p,\bm{0}$ be a $C^\infty$-germ.\\
            (1) The {\bf codimension} $\text{codim}_{\mathcal{A}_e} f$ is defined as follows.
            $$\text{codim}_{\mathcal{A}_e} f := \dim_{\mathbb{R}}\fracset{\theta(f)}{tf(\theta_n) + \omega f(\theta_p)} = \dim_{\mathbb{R}}\fracset{\theta(f)}{T\mathcal{A}_ef}$$
            (2) We say that $f$ is {\bf $k$-determined} if any germ $g:\mathbb{R}^n,\bm{0} \rightarrow \mathbb{R}^p,\bm{0}$, having the same $k$-jets to $f$ at $\bm{0}$, is $\mathcal{A}$-equivalent to $f$. We say that $f$ is finitely $\mathcal{A}$-determined if it is $k$-determined for some $k>0$. The $\mathcal{A}$-determinacy degree of $f$ is the lowest $k$ such that $f$ is  $k$-determined.
        \end{dfn}

    \begin{thm}[\cite{Mtext}, Theorem 6.2]\label{prop:det}
        If $f$ is $k$-determined, then $m_n^{k+1} \theta(f) \subset T\mathcal{A}_ef$, where $m_n = \{ \phi:\mathbb{R}^n, \bm{0} \rightarrow \mathbb{R},C^{\infty}, \phi(\bm{0})=0 \}$.
        % The following statements are equivalent;\\
        % (a) $f$ is finitely $\mathcal{A}$-determined.\\
        % (b) For some $k$, $m_n^k \theta(f) \subset T\mathcal{A}_ef$\\
        % (c) $\text{codim}_{\mathcal{A}_e}f < \infty$
    \end{thm}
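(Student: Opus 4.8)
The plan is to translate the statement into finite-dimensional jet geometry and then remove the resulting truncation error by a preparation/Nakayama argument. First I would observe that $k$-determinacy passes upward: if $f$ is $k$-determined then it is $N$-determined for every $N\ge k$, since two germs sharing their $N$-jet a fortiori share their $k$-jet. This lets me work, for each fixed $N\ge k$, inside the finite-dimensional jet space $J^N(n,p)$, on which the jet group $\mathcal{A}^N=J^N\mathcal{A}$ of source and target diffeomorphism jets acts algebraically, with orbits that are immersed submanifolds. The tangent space at $z_0=j^Nf(\bm{0})$ to the orbit $\mathcal{A}^Nz_0$ is the image in $J^N(n,p)$ of $tf(m_n\theta_n)+\omega f(m_p\theta_p)$, the infinitesimal action of the two diffeomorphism groups being realised by $tf$ and $\omega f$; in particular it is contained in $T\mathcal{A}_ef=tf(\theta_n)+\omega f(\theta_p)$.

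The key geometric input comes next. Let $V_N=\{z\in J^N(n,p): j^k z = j^k z_0\}$ be the affine fibre of jets agreeing with $f$ up to order $k$. Any germ $g$ with $j^kg(\bm{0})=j^kf(\bm{0})$ is $\mathcal{A}$-equivalent to $f$ by $k$-determinacy, so $j^Ng(\bm{0})$ lies in the orbit $\mathcal{A}^Nz_0$; hence $V_N\subset\mathcal{A}^Nz_0$. A connected submanifold contained in a single orbit has tangent space contained in that of the orbit, so $T_{z_0}V_N\subset T_{z_0}(\mathcal{A}^Nz_0)$. Since $T_{z_0}V_N$ is precisely the space of jets of homogeneous degrees $k+1,\dots,N$, this inclusion reads $m_n^{k+1}\theta(f)\subset T\mathcal{A}_ef+m_n^{N+1}\theta(f)$. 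As $N\ge k$ was arbitrary, I obtain $m_n^{k+1}\theta(f)\subset\bigcap_{N\ge k}\bigl(T\mathcal{A}_ef+m_n^{N+1}\theta(f)\bigr)$.

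It remains to show that this intersection equals $T\mathcal{A}_ef$, and this is the step I expect to be the main obstacle, because $T\mathcal{A}_ef$ is not a module over $\mathcal{E}_n$ and the ordinary Nakayama lemma does not apply. The route I would take is to exploit the $f^*\mathcal{E}_p$-module structure: $\theta(f)$ is a finitely generated (indeed free, rank $p$) module over $f^*\mathcal{E}_p$ by Malgrange's preparation theorem, which applies because $k$-determinacy forces $f$ to be $\mathcal{K}$-finite, hence finite. Both $tf(\theta_n)$ and $\omega f(\theta_p)$ are stable under multiplication by $f^*\mathcal{E}_p$, so $T\mathcal{A}_ef$ is an $f^*\mathcal{E}_p$-submodule of $\theta(f)$ over the Noetherian local ring $f^*\mathcal{E}_p$. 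Finiteness of $f$ also gives $m_n^{s}\theta(f)\subset (f^*m_p)\theta(f)$ for some $s$, so the $m_n$-adic and $(f^*m_p)$-adic filtrations of $\theta(f)$ are cofinal. The Krull intersection theorem (equivalently Artin--Rees) then yields $\bigcap_{N}\bigl(T\mathcal{A}_ef+m_n^{N+1}\theta(f)\bigr)=T\mathcal{A}_ef$, which closes the argument. The two points to verify with care are the identification of the orbit tangent space with the image of $tf(m_n\theta_n)+\omega f(m_p\theta_p)$ and the cofinality of the two filtrations; everything else is formal.
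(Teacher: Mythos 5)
The paper itself gives no proof of this statement---it is quoted verbatim from Mond--Nu\~no-Ballesteros \cite{Mtext}---so your attempt can only be measured against the standard argument found there. Your opening is correct and is exactly that standard opening: upward determinacy, the inclusion of the affine fibre $V_N$ in the jet-group orbit $\mathcal{A}^N z_0$, and the tangent-space comparison giving $m_n^{k+1}\theta(f)\subset T\mathcal{A}_ef+m_n^{N+1}\theta(f)$ for every $N\geq k$. The genuine gap is the endgame. First, $f^*\mathcal{E}_p$ is \emph{not} a Noetherian local ring: $\mathcal{E}_p$ is famously non-Noetherian, and so are its images in $\mathcal{E}_n$ (for $f=\mathrm{id}$ your claim would make $\mathcal{E}_n$ itself Noetherian). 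Hence neither the Krull intersection theorem nor Artin--Rees is available, and this is not a technicality: in $\mathcal{E}_n$ one has $\bigcap_j m_n^j\neq 0$ (flat germs), Krull intersection fails outright, and finitely generated submodules need not be $m_n$-adically closed. So the equality $\bigcap_N\bigl(T\mathcal{A}_ef+m_n^{N+1}\theta(f)\bigr)=T\mathcal{A}_ef$ is precisely the hard closure assertion in the smooth category, and your justification of it is void. Second, your finiteness input is also false in the stated generality: $k$-determinacy does not force $f$ to be finite. Any submersion germ is $1$-determined, and $f(x,y)=x^2+y^2$ as a germ $\mathbb{R}^2,\bm{0}\to\mathbb{R},0$ is $2$-determined but not a finite map germ; in such cases $\theta(f)$ is not finitely generated over $f^*\mathcal{E}_p$ and Malgrange preparation gives nothing (and even for finite $f$, freeness of rank $p$ is unjustified---one only gets finite generation).

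The classical repair, which is how \cite{Mtext} and the literature (Mather, Wall, du Plessis) actually close the argument, avoids the infinite intersection altogether: fix one sufficiently large $N$ and finish with Nakayama's lemma, which holds for finitely generated modules over \emph{any} local ring, Noetherian or not. Concretely, when $f$ is finite one proves $m_n^s\subset(f^*m_p)\mathcal{E}_n$ for some $s$---your cofinality observation, correctly obtainable in the smooth category by Nakayama over $\mathcal{E}_n$ from $\dim_{\mathbb{R}}\mathcal{E}_n/(f^*m_p)\mathcal{E}_n<\infty$---then takes $N\geq k+s$, so that $C:=m_n^{k+1}\theta(f)$ satisfies $C\subset T\mathcal{A}_ef+(f^*m_p)C$; since $C$ is finitely generated over $f^*\mathcal{E}_p$ by preparation, the quotient $(C+T\mathcal{A}_ef)/T\mathcal{A}_ef$ is a finitely generated $f^*\mathcal{E}_p$-module equal to $(f^*m_p)$ times itself, hence zero, giving $C\subset T\mathcal{A}_ef$. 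This uses only that $f^*\mathcal{E}_p$ is local. For non-finite $f$, which the theorem as stated allows, an additional argument is needed (see \cite{Mtext} or Wall's survey on finite determinacy); so your proposal, besides the two incorrect invocations, does not cover the full generality of the statement, and it becomes correct only in the finite case and only after replacing the Krull-intersection step by the single-$N$ Nakayama argument.
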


    \begin{dfn}
        Let $F:\mathbb{R}^n\times\mathbb{R}^r,\bm{0} \ni (x,u) \mapsto (\tilde{f}(x.u),u) \in \mathbb{R}^p\times\mathbb{R}^r,\bm{0}$ be an unfolding of $f_0:\mathbb{R}^n,\bm{0} \rightarrow \mathbb{R}^p,\bm{0}$. We say that $F$ is {\bf versal} if for every unfolding $G$ of $f_0$ with parameter space $(\mathbb{R}^s,\bm{0})$, there is a $C^\infty$-map $h:\mathbb{R}^s,\bm{0} \rightarrow \mathbb{R}^r,\bm{0}$ such that G is equivalent as unfolding to the unfolding $h^{*}F$, where $h^*F(x,v) = (\tilde{f}(x,h(v)),v)$ known as pull-back by base-change map.
    \end{dfn}

    \begin{thm}[\cite{Mtext}, Theorem 5.1]\label{prop:IV}
        The unfolding $F$ of $f_0$ is versal if and only if
        $$T\mathcal{A}_ef_0 + V_F = \theta(f_0).$$
        Here, $V_F:= \langle \frac{\partial F}{\partial u_1}|_{\mathbb{R}^n \times \bm{0}},...,\frac{\partial F}{\partial u_r}|_{\mathbb{R}^n \times \bm{0}} \rangle _{\mathbb{R}}$ is an $\mathbb{R}$-module spanned by partial differentiations of $F$ with respect to unfolding parameters with restriction to $\mathbb{R}^n\times\{\bm{0}\}$.
    \end{thm}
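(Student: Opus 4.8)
The plan is to reduce the entire theorem to Mather's infinitesimal versality criterion, Theorem~\ref{prop:IV}: the rotation unfolding $R$ is versal at $\bm{0}$ if and only if
$$T\mathcal{A}_e F + V_R = \theta(F),$$
so that everything comes down to comparing the two-dimensional space $V_R = \langle \partial R/\partial u_1|_{\mathbb{R}^2\times\bm{0}},\, \partial R/\partial u_2|_{\mathbb{R}^2\times\bm{0}}\rangle_{\mathbb{R}}$ against the normal space $\theta(F)/T\mathcal{A}_e F$. First I would use the explicit form of $R$ from $\S$\ref{S:Def} to write out the two infinitesimal generators $\dot R_1,\dot R_2\in\theta(F)$ — the velocity vector fields along $F$ obtained by differentiating in the rotation parameters at the base direction — as concrete polynomial maps in $(x,y)$ whose coefficients are the jet coefficients $a_{ij}$ of $f$ together with $k_1,k_2$.

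Next, for each singularity type I would determine $T\mathcal{A}_e F$ up to the order dictated by finite determinacy. Using the determinacy degree of each germ ($S_0$ the cross-cap, then $S_1$, $S_2$, $B_2$) together with Theorem~\ref{prop:det}, I can truncate, since $m_2^{k+1}\theta(F)\subset T\mathcal{A}_e F$ reduces the computation of $T\mathcal{A}_e F$ to a finite-dimensional jet space. From the normal forms in the singularity table I would read off an explicit $\mathbb{R}$-basis $\{e_1,\dots,e_c\}$ of the cokernel $\theta(F)/T\mathcal{A}_e F$, where $c=\mathrm{codim}_{\mathcal{A}_e}F$ equals $0,1,2,2$ for $S_0,S_1,S_2,B_2$ respectively. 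Versality is then equivalent to the statement that the images $\overline{\dot R_1},\overline{\dot R_2}$ span this cokernel, i.e.\ that the $c\times 2$ coefficient matrix expressing $\dot R_1,\dot R_2$ in the basis $\{e_i\}$ has rank $c$.

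With this set up, parts (1)--(3) become rank computations. For $S_0$ ($c=0$) versality is automatic, and for $S_1$ ($c=1$) I would show the single relevant coefficient of $\dot R_1$ (or $\dot R_2$) against $e_1$ is a nonzero constant independent of the geometry, giving part (1). For $S_2$ and $B_2$ ($c=2$) I would compute the $2\times 2$ determinant of the coefficient matrix; versality fails exactly when this determinant vanishes. The task is then to identify the vanishing locus geometrically: for $S_2$ the determinant should factor with $k_1-k_2$ as the essential factor, so it vanishes iff $k_1=k_2$, i.e.\ the origin is umbilic, giving part (2). For $B_2$ I would match the determinant against the asymptotic coefficients of $\bm v_2\kappa_2$ from Lemma~\ref{prop:ridge}: its linear part governs the direction of the ridge line, so non-vanishing in the non-umbilic case translates into transversality of the ridge line to the plane $\Pi$; and in the umbilic case I would pass to the complex normal form $f=\tfrac{k}{2}z\bar z + c(z)+O(4)$ and invoke the $D_4$ discriminant condition of Proposition~\ref{prop:umb} to complete part (3).

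The main obstacle I expect is twofold. Computationally, the hardest part is extracting a clean basis of the cokernel $\theta(F)/T\mathcal{A}_e F$ for $B_2$, since $B_2$ has the highest determinacy degree among the four and $T\mathcal{A}_e F$ must be controlled over a correspondingly larger jet space; keeping the dependence on $a_{ij},k_1,k_2$ explicit through this reduction is delicate. Conceptually, the harder step is the translation in part (3): the naive ridge/subparabolic description of Lemma~\ref{prop:ridge} degenerates at an umbilic (as the text already flags), so the umbilic case must be handled separately via the complex coordinates and Proposition~\ref{prop:umb}, and one must verify that the algebraic non-vanishing condition really coincides with the stated geometric alternative (non-umbilic with transverse ridge, or $D_4$ umbilic). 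Checking that these two descriptions agree on the overlap, and that no spurious degeneracy is introduced by the chosen cokernel basis, is where most of the care will be needed.
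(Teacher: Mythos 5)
There is a genuine gap, and it is structural rather than computational: the statement you were asked to prove is the infinitesimal criterion for versality itself (Theorem~\ref{prop:IV}, quoted from Mond--Nu\~no-Ballesteros, Theorem 5.1), namely the equivalence ``$F$ is a versal unfolding of $f_0$ if and only if $T\mathcal{A}_e f_0 + V_F = \theta(f_0)$.'' Your proposal never argues for this equivalence; instead, its very first step \emph{invokes} Theorem~\ref{prop:IV} as a known tool and then reduces the paper's main theorem to rank computations. As a proof of the statement in question this is circular: you assume the conclusion in line one. What you have actually sketched is the proof of Proposition~\ref{prop:ACMThm} and the geometric theorem of \S\ref{S:Geo} (truncation via the determinacy estimate of Theorem~\ref{prop:det}, explicit computation of $V_R$, full-rank conditions on coefficient matrices, comparison with Lemma~\ref{prop:ridge}, and the separate umbilic analysis via Proposition~\ref{prop:umb}) --- and that sketch does match the paper's strategy for \emph{those} results quite faithfully --- but it is an answer to a different question.

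For the record, the paper itself offers no proof of this theorem either: it is a cited foundational result, and legitimately so. If you did want to prove it, the content is entirely different from anything in your plan. The easy direction is necessity: given versality, differentiate a family of trivializing diffeomorphisms with respect to the unfolding parameters to see that every element of $\theta(f_0)$ decomposes into $tf_0(\theta_n) + \omega f_0(\theta_p) + V_F$. The hard direction is sufficiency, which requires the Malgrange--Mather preparation theorem to pass from the infinitesimal (vector-space) condition to a module-theoretic surjectivity over the relevant local rings, followed by a Thom--Levine-type homotopy argument: one constructs vector fields over the parameter space whose integration produces the families of diffeomorphisms realizing the equivalence of any unfolding with a pull-back of $F$. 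None of these ingredients --- preparation theorem, integration of liftable vector fields, reduction of an arbitrary unfolding to the given one --- appears in your proposal, so even setting aside the circularity, the plan contains no route to the stated equivalence.
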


    \subsection{The classification of germs of maps from $\mathbb{R}^2$ to $\mathbb{R}^3$}

        We use the classification result of map germs from $\mathbb{R}^2$ to $\mathbb{R}^3$ due to Mond (\cite{Mclass} and \cite{F}).
        
        % we regard the folding map $F$ as $\mathcal{A}$-simple map germ and use this classification to calculate and obtain a bifurcation set.

        \begin{thm}[\cite{Mclass}, Theorem 1.1]\label{prop:MC}    % Mond's classification & some property
            The $\mathcal{A}$-simple map $(x,y^2,f(x,y))$ is $\mathcal{A}$-equivalent to one of the germs on the following list.

            \begin{center}
                \begin{tabular}{cccc}\hline
                    $\text{Germ} \  f(x,y)$   & $\mathcal{A}_e \text{-codim}$   & $\text{Determinacy for }\mathcal{A}$ & $\text{Name}$\\ \hline
                    $(x,y^2,xy)$  & $0$ & $2$     & Cross-cap ($S_0$)\\
                    $(x,y^2,y^3 \pm x^{k+1}y),\ k \geq 1$     & $k$   & $k+2$   & $S_k^{\pm}$\\
                    $(x,y^2,x^2y \pm y^{2k+1}), \ k \geq 2$   & $k$ & $2k+1$     & $B_k^{\pm}$\\
                    $(x,y^2,xy^3 \pm x^ky), \ k \geq 3$   & $k$ & $k+1$     & $C_k^{\pm}$\\
                    $(x,y^2,x^3y + y^5)$  & $4$ & $5$     & $F_4$\\     \hline
                \end{tabular}
            \end{center}
        \end{thm}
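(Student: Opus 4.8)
The plan is to classify corank-one germs $\mathbb{R}^2,\bm{0}\to\mathbb{R}^3,\bm{0}$ of folding form by the \emph{complete transversal} method, organised jet by jet. First I would record the preliminary normalisation: any germ $(x,y^2,g_3(x,y))$ reduces, by target changes $(X,Y,Z)\mapsto(X,Y,Z-c(X,Y))$ that subtract the even-in-$y$ part $c(x,y^2)$ of $g_3$, to one with $f:=g_3$ odd in $y$, so $f\in m_2^2$ and $f\circ\sigma=-f$ for the fold involution $\sigma(x,y)=(x,-y)$. Thus $g$ is $\mathbb{Z}/2$-equivariant for $\rho(X,Y,Z)=(X,Y,-Z)$, and every tangent-space computation splits along $\mathcal{E}_2=\mathcal{E}_2^{\mathrm{ev}}\oplus y\,\mathcal{E}_2^{\mathrm{ev}}$, with $\mathcal{E}_2^{\mathrm{ev}}\cong\mathcal{E}_2$ via $y^2=Y$. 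Writing $\theta(g)$ with free basis $e_1,e_2,e_3$ (the pulled-back target frame), $T\mathcal{A}_e g$ is generated over the relevant rings by $g_x=(1,0,f_x)$, $g_y=(0,2y,f_y)$ and by the $e_i$ multiplied by functions of $(x,y^2,f)$; I would carry the module in the reduced form in which the $e_3$-component is the active one.

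Next comes the inductive engine. At the $2$-jet level the only odd monomial is $xy$; if its coefficient is nonzero, rescaling gives $(x,y^2,xy)$, which is $2$-determined of $\mathcal{A}_e$-codimension $0$ (the cross-cap $S_0$). If that coefficient vanishes the $3$-jet of $f$ is $ay^3+bx^2y$, and I would branch: (a) $a\neq0$ normalises to $y^3$ and opens the $S_k$ series; (b) $a=0,\ b\neq0$ normalises to $x^2y$ and opens the $B_k$ series; (c) deeper vanishing leads to $xy^3$ (the $C_k$ series) and finally to $x^3y+y^5$ ($F_4$). On each branch I would iterate the complete-transversal step: compute $T\mathcal{A}_1 f\bmod m_2^{k+2}$, determine the complementary degree-$(k+1)$ transversal space $T$, and then use the residual discrete symmetries — the scalings $x\mapsto\lambda x$, $y\mapsto\mu y$ with a target scaling, together with the sign involution — to normalise the surviving transversal coefficient to $0$ or $\pm1$. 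This produces exactly the one-coefficient families of the table and pins down the $\pm$ signs.

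For each normal form I would then prove finite determinacy with the stated degree and compute the codimension. Determinacy follows from the standard sufficient criterion $m_2^{k+1}\theta(f)\subseteq T\mathcal{A}_1 f$ (the converse implication to Theorem~\ref{prop:det}), checked for these quasihomogeneous germs by a direct generation argument in the reduced module; the $\mathcal{A}_e$-codimension is $\dim_{\mathbb{R}}\theta(f)/T\mathcal{A}_e f$, and quasihomogeneity makes this quotient finite-dimensional with an explicit monomial basis, yielding $0,k,k,k,4$. For the $\pm$ variants I would verify that the sign is a genuine $\mathcal{A}$-invariant over $\mathbb{R}$, so that $S_k^{\pm}$, $B_k^{\pm}$, $C_k^{\pm}$ are distinct.

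The decisive and subtle step is completeness together with simplicity: I must show that \emph{every} simple folding germ lies on one of the four branches, and that each branch is cut off exactly where a continuous modulus first appears. Concretely I would track, at each branch point, whether the finite list of orbits produced by the discrete normalisations exhausts a neighbourhood in jet space — as long as the residual scaling group acts with finitely many orbits on the transversal the germ is simple, whereas at the first jet where two independent transversal coefficients survive modulo the group a $1$-parameter family of orbits (a modulus) appears, and such germs are non-simple and hence excluded. This bookkeeping of the finitely many degenerations — proving that off the listed series one always meets a modulus while on them the classification terminates in the stated forms — is where the bulk of the work, and the main obstacle, lies. Assembling these branchings yields precisely the five entries of the table.
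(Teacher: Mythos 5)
The paper does not prove this statement at all: it is imported verbatim from Mond's classification (\cite{Mclass}, Theorem 1.1), so there is no internal argument to compare against. What you have written is, in outline, precisely the route of the cited source, recast in the later complete-transversal language of Bruce--Kirk--du Plessis. Your preliminary normalization (subtracting the even-in-$y$ part of the third component by a target change, so that $f$ is odd in $y$), the resulting $\mathbb{Z}/2$-splitting of the tangent-space computations, the branch structure ($xy$ giving $S_0$; $y^3$ opening the $S_k$ series; $x^2y$ opening $B_k$; then, when the $3$-jet vanishes, $xy^3$ giving $C_k$ and $x^3y+y^5$ giving $F_4$ at the $4$-jet level), and the determinacy-plus-codimension verification for the quasihomogeneous normal forms are exactly the standard ingredients, correctly ordered. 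Note also that the statement here is the folding-form restriction of Mond's theorem (second component frozen at $y^2$), which is why the $H_k$ series of his full list cannot occur; your setup respects this.

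As a proof, however, the proposal is incomplete in precisely the place you flag yourself: the assertion that off the four branches a continuous modulus always appears, so that the listed germs exhaust the $\mathcal{A}$-simple folding germs, is the bulk of Mond's paper, and your plan only names this bookkeeping without executing it; likewise the determinacy degrees $2,\,k+2,\,2k+1,\,k+1,\,5$ and the codimensions $0,\,k,\,k,\,k,\,4$ are asserted to follow from ``a direct generation argument'' rather than computed. Two technical cautions if you carry the plan out. First, the converse of Theorem~\ref{prop:det} is false for $\mathcal{A}$ as stated; your inclusion $m_2^{k+1}\theta(f)\subseteq T\mathcal{A}_1 f$ does yield $k$-determinacy, but only via the unipotent determinacy theorem of Bruce--du Plessis--Wall, which must be invoked explicitly (and note $T\mathcal{A}_1$ versus $T\mathcal{A}_e$ matters again when you pass to the codimension count). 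Second, the classification is up to the full group $\mathcal{A}$, which does not preserve the folding structure: the $\mathbb{Z}/2$-equivariant splitting is legitimate as a device for organizing $T\mathcal{A}_e f$ for a folding germ, but the complete transversals and the simplicity/moduli counts must be taken with respect to full $\mathcal{A}$-tangent spaces, not the equivariant subspace, since otherwise germs identified by non-equivariant diffeomorphisms would be counted as distinct orbits and the moduli detection would be skewed. With those two repairs and the exhaustiveness bookkeeping actually performed, your outline is a faithful reconstruction of the proof the paper delegates to \cite{Mclass}.
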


        For an unfolding to be versal, it is necessary for it to have parameters at least as many as the codimension of each singularity. The singularities with codimension 2 or less are $S_0, S_1, S_2$ and $B_2$. Therefore, the rotation unfolding can be versal only if the folding map $F$ has a singularity $\mathcal{A}$-equivalent to them.

        \begin{prop}\label{prop:AC}    % algebraic condition of Mond's type
            Let $f(x,y) = \sum^m_{i+j \geq 2} \frac{1}{i!j!}a_{ij} x^i y^j + O(m+1)$. We show the algebraic conditions for $f(x,y)$ such that the folding map $(x,y^2,f(x,y))$ is $\mathcal{A}$-equivalent to the following singularities as follows.
            % In the next table, there are the algebraic conditions for $f(x,y)$ such that the folding map $(x,y^2,f(x,y))$ is $\mathcal{A}$-equivalent to the following singularities.

            \begin{center}
                \begin{tabular}{cl}\hline
                    $\text{Name}$ & $\text{Algebraic condition}$ \\ \hline
                    Cross-cap($S_0$) & $a_{11} \not = 0$ \\
                    $S_1$  & $a_{11} = 0, a_{21} \not = 0, a_{03} \not = 0$  \\
                    $S_2$   & $a_{11} = 0, a_{21} = 0, a_{03} \not = 0, a_{31} \not = 0$ \\
                    $B_2$   & $a_{11} = 0, a_{21} \not = 0, a_{03} = 0, 3a_{21}a_{05}-5a_{13}^2 \not = 0$  \\ \hline
                \end{tabular}
            \end{center}
        \end{prop}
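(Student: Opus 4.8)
The plan is to exploit the even/odd structure of the folding map to reduce $F$ to a one-function normal form and then match it against Mond's list (Theorem~\ref{prop:MC}). First I would write $f=f_e+f_o$ with $f_e=\tfrac12(f(x,y)+f(x,-y))$ even in $y$ and $f_o=\tfrac12(f(x,y)-f(x,-y))$ odd in $y$. By Whitney's lemma $f_e(x,y)=\tilde a(x,y^2)$ and $f_o(x,y)=y\,b(x,y^2)$ for smooth germs $\tilde a,b$. Since the first two components of $F$ are $(x,y^2)$, the target diffeomorphism $(X,Y,Z)\mapsto(X,Y,Z-\tilde a(X,Y))$ removes the even part, so $F\sim_{\mathcal A}(x,y^2,y\,b(x,y^2))$. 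Reading off the coefficients of $b(x,w)=\sum b_{ij}x^iw^j$ from those of $f$ gives $b_{00}=a_{01}=0$, $b_{10}=a_{11}$, $b_{20}=\tfrac12a_{21}$, $b_{30}=\tfrac16a_{31}$, $b_{01}=\tfrac16a_{03}$, $b_{11}=\tfrac16a_{13}$ and $b_{02}=\tfrac1{120}a_{05}$.

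Next I would classify $(x,y^2,y\,b(x,y^2))$ through the germ of $b$, noting that in these coordinates the normal forms of Theorem~\ref{prop:MC} are $b=x$ (cross-cap), $b=w\pm x^{k+1}$ ($S_k$) and $b=x^2\pm w^k$ ($B_k$); the admissible changes are the fold-compatible source diffeomorphisms $(x,y)\mapsto(X(x,y^2),\,y\,Y(x,y^2))$ with $Y(\bm0)\neq0$ together with target diffeomorphisms. If $a_{11}=b_{10}\neq0$ the linear part of $b$ is nonzero along $x$, and scaling reduces $b$ to $x$: the cross-cap. If $a_{11}=0$ the linear part of $b$ is $b_{01}w=\tfrac16a_{03}w$; when $a_{03}\neq0$ the zero set $\{b=0\}$ is a graph $w=h(x)$ whose order of contact with the fold curve $w=0$ is the vanishing order of $b(x,0)=\tfrac12a_{21}x^2+\tfrac16a_{31}x^3+\cdots$. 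Thus $a_{21}\neq0$ gives contact $2$, i.e. $b\sim w\pm x^2$ ($S_1$), while $a_{21}=0,\ a_{31}\neq0$ gives contact $3$, i.e. $b\sim w\pm x^3$ ($S_2$). If moreover $a_{03}=0$, then $b$ is singular with quadratic part
\[
Q(x,w)=\tfrac12a_{21}x^2+\tfrac16a_{13}xw+\tfrac1{120}a_{05}w^2,
\]
and assuming $a_{21}\neq0$, completing the square in $x$ leaves a residual coefficient of $w^2$ equal to $\tfrac{a_{05}}{120}-\tfrac{a_{13}^2}{72a_{21}}$, which is proportional to $3a_{21}a_{05}-5a_{13}^2$. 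Hence this quantity being nonzero makes $Q$ nondegenerate and $b\sim x^2\pm w^2$, which is $B_2$.

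Finally I would upgrade these jet-level reductions to genuine $\mathcal A$-equivalences by the determinacy degrees of Theorem~\ref{prop:MC} (cross-cap: $2$; $S_1$: $3$; $S_2$: $4$; $B_2$: $5$), applying Theorem~\ref{prop:det} to the reduced germ so that matching jets up to that order suffices; this also explains transparently why the order-$4$ coefficient $a_{13}$ and the order-$5$ coefficient $a_{05}$ must appear precisely in the $5$-determined case $B_2$. The $\pm$ ambiguity only selects between the signed forms, all of which occur in the list with the same codimension. I expect the main obstacle to be the bookkeeping in the $B_2$ case: the completion of square is realized by the source substitution $x\mapsto x-\tfrac{a_{13}}{6a_{21}}y^2$, which mixes weights and perturbs higher-order terms, so one must verify that no lower-order term is reintroduced and that the successive changes stay within the fold-compatible group preserving the form $(x,y^2,\cdot)$; establishing the required finite determinacy, i.e. $m_2^{6}\,\theta(F)\subset T\mathcal A_eF$, is the technical heart of this case.
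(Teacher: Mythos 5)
You could not have matched the paper's approach, because the paper does not prove Proposition~\ref{prop:AC} at all: its ``proof'' is a pointer to Proposition 2.2 of the companion Fukui--Hiramatsu preprint and to a table in the literature. Your argument is therefore a genuinely different --- namely, self-contained --- route, and it is correct; indeed it is the classical reduction, going back to Bruce--Wilkinson and Mond, that underlies the results the paper cites. The key steps all check out: the even/odd splitting with Whitney's lemma and the target shear $(X,Y,Z)\mapsto(X,Y,Z-\tilde a(X,Y))$ reduce $F$ to $(x,y^2,y\,b(x,y^2))$; your coefficient dictionary is exact ($b_{10}=a_{11}$, $b_{20}=\frac{1}{2}a_{21}$, $b_{30}=\frac{1}{6}a_{31}$, $b_{01}=\frac{1}{6}a_{03}$, $b_{11}=\frac{1}{6}a_{13}$, $b_{02}=\frac{1}{120}a_{05}$); the contact-order argument delivers $S_1$ and $S_2$; and in the $B_2$ case the residual coefficient of $w^2$ after completing the square is $\frac{a_{05}}{120}-\frac{a_{13}^2}{72a_{21}}=\frac{3a_{21}a_{05}-5a_{13}^2}{360a_{21}}$, so nondegeneracy of the quadratic part is exactly the stated inequality. (The hypothesis $a_{21}\neq 0$ is essential there: it is what separates $B_2$ from the $C_k$ branch $b=xw\pm x^k$, since the restricted group cannot rotate $xw$ into $x^2\pm w^2$.) What your route buys is transparency --- every entry of the table is traced to an explicit coordinate change --- at the cost of having to justify the group you work in, which the cited literature does once and for all.

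On that last point, two things should be tightened. First, record the one-line lemma that your fold-compatible moves really induce $\mathcal{A}$-equivalences: if $\Psi(x,w)=(X(x,w),w\,W(x,w))$ with $W(0)>0$ and $u$ is a unit, then the source diffeomorphism $\phi(x,y)=(X(x,y^2),\,y\sqrt{W(x,y^2)})$ together with an evident target diffeomorphism gives $(x,y^2,y\,b(x,y^2))\sim_{\mathcal{A}}(x,y^2,\,y\,u(x,y^2)\,b(\Psi(x,y^2)))$; note that $w$ can only be rescaled by positive units, which is precisely why both signs $\pm$ survive in the normal forms. Second, your closing worry about establishing $m_2^{6}\theta(F)\subset T\mathcal{A}_eF$ is misplaced: determinacy is needed for the \emph{normal form}, not for the original germ $F$. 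Once your reductions make the $2$-, $3$-, $4$-, respectively $5$-jet of the transformed germ equal to that of the $S_0$, $S_1$, $S_2$, respectively $B_2$ normal form, the determinacy degrees listed in Theorem~\ref{prop:MC} finish the proof directly by the definition of $k$-determinacy (Theorem~\ref{prop:det} is the converse inclusion and is not what you need here); no new determinacy estimate has to be established, so the ``technical heart'' you anticipate is already covered by the quoted classification.
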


        \begin{proof}
            See the proof of Proposition 2.2 in \cite{FH}. Refer Table 8 in \cite{F} also. 
        \end{proof}

        % Since the singularity is a cross cap, the rotation unfolding is obviously versal, we can assume that the coefficient $a_{11}$ is $0$.

        The geometric conditions of these singularities at a non umbilical point are as follows:
        
        \begin{itemize}
            \item  If the folding map $F$ is $\mathcal{A}$-equivalent to the $S_1$ singularity, a principal vector $v$ of the surface $M$ at the origin is orthogonal to the reflection plane $\Pi$ and the origin is not $v$-ridge and $v$-subparabolic. 
            (If the origin is umbilic, no $v$-ridge lines reach the origin and no $v$-subparabolic lines reach the origin.)
        
            \item If the folding map $F$ is $\mathcal{A}$-equivalent to the $S_2$ singularity, a principal vector $v$ of the surface $M$ at the origin is orthogonal to the reflection plane $\Pi$ and the origin is $v$-subparabolic and not $v$-ridge. 
            (If the origin is umbilic, no $v$-ridge lines reach the origin and a $v$-subparabolic line reaches the origin.)
            %(If the origin is umbilic, there is a subparabolic line passing through the origin with $v$-direction of its tangent vector at the origin and no ridge line passing through the origin with $v$-direction of its tangent vector at the origin.)
        
            \item If the folding map $F$ is $\mathcal{A}$-equivalent to the $B_2$ singularity, a principal vector $v$ of the surface $M$ at the origin is orthogonal to the reflection plane $\Pi$ and the origin is $v$-ridge and not $v$-subparabolic.
            (If the origin is umbilic, a $v$-ridge line reaches the origin and no $v$-subparabolic lines reach the origin.)
            
            % (If the origin is umbilic, there is a ridge line passing through the origin with $v$-direction of its tangent vector at the origin and no subparabolic line passing through the origin with $v$-direction of its tangent vector at the origin.)
        \end{itemize}

% ----ROTATION UNDOLDING OF FOLDING MAPS---------------------

\section{Rotation Unfolding of Folding maps}\label{S:Def}

    % At point $\bm{0}\in M$ we can choose mutually perpendicular coordinate axes $(x,y,z)$ with the $z$-axis in the normal direction at $\bm{0}$, and the unit normal vector of $M$ at $\bm{0}$ is denoted by $\bm{\nu} = (0,0,1)$. Locally $M$ can be written as the graph of a function $z=f(x,y)$ whose Taylor expansion at $\bm{0} \in \mathbb{R}^2$ has no constant or linear terms. The function $z=f(x,y)$ is called Monge form and written.
    % This is why the hypothesis at the last of section 2 means that the surface $M$ is expressed in Monge form.
    The rotation unfolding is obviously versal if the singularity of $F$ is a cross-cap. Thus, we can assume that the coefficient $a_{11}$ is $0$, that is, we can assume that $f$ is given in the following form:
    
    $$f(x,y) = \frac{1}{2} (k_1 x^2 + k_2 y^2) + \sum^m_{i+j \geq 3} \frac{1}{i!j!}a_{ij} x^i y^j + O(m+1).$$
        
    Let $\Pi_{\bm{v}}$ be a plane through $(0,0,0)$ with a normal vector $\bm{v} \in S^2$. Let $\bm{\nu}=(0,0,1)$ be a normal vector of the surface $M$ at $\bm{0}$. We consider an orthonormal frame
    
    $$\bm{\nu} \times \bm{v}, \ \bm{v} ,\  (\bm{v} \times \bm{\nu}) \times \bm{v}.$$
    Then the folding map for $\bm{v}$-direction is given by
        \begin{equation}{\label{FoldingMap}}
            s \bm{v} \times \bm{\nu} + t \bm{v} + r (\bm{v} \times \bm{\nu}) \times \bm{v} \ \longmapsto\  s \bm{v} \times \bm{\nu} + t^2 \bm{v} + r (\bm{v} \times \bm{\nu}) \times \bm{v}.     
        \end{equation}
    The rotation unfolding is obtained as a composition of \eqref{FoldingMap} with graph map $g:(x,y) \mapsto (x,y,f(x,y))$. The following lemma allows us to express $R$ in an explicit form.
    
    % and denoted by $R:\mathbb{R}^2,\bm{0} \times S^2,(0,1,0) \rightarrow \mathbb{R}^3,\bm{0}$. We call it the rotation unfolding.

    \begin{lem}     % The unfolding representation
        Let $v=(v_1,v_2,v_3) \in S^2$ with $v_1^2 + v_2^2 + v_3^2 = 1$. Then\\
        $$\bm{v} \times \bm{\nu} = \frac{(v_2,-v_1,0)}{\sqrt{1-v_3^2}}, \ \ \ \ (\bm{v} \times \bm{\nu}) \times \bm{v} = \frac{(-v_1v_3,-v_2v_3,1-v_3^2)}{\sqrt{1-v_3^2}}$$
        $$s = \frac{v_2x-v_1y}{\sqrt{1-v_3^2}}, \ \ \ t = v_1x + v_2y + v_3f(x,y), \ \ \ r = \frac{-v_1v_2x - v_2v_3y + (1-v_3^2)f(x,y)}{\sqrt{1-v_3^2}}$$
    \end{lem}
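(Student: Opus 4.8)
The plan is to verify all five formulas by direct computation, treating the claimed orthonormal frame as the substantive content and then reading off $s,t,r$ as the coordinates of the graph point in that frame.

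First I would compute the two cross products from the determinant formula. With $\bm{\nu}=(0,0,1)$, expansion gives $\bm{v}\times\bm{\nu}=(v_2,-v_1,0)$, and then $(\bm{v}\times\bm{\nu})\times\bm{v}=(v_2,-v_1,0)\times(v_1,v_2,v_3)=(-v_1v_3,\,-v_2v_3,\,v_1^2+v_2^2)$. The key step is to exploit the constraint $v_1^2+v_2^2+v_3^2=1$: it rewrites the third entry $v_1^2+v_2^2$ as $1-v_3^2$, and it collapses both norms, since $|\bm{v}\times\bm{\nu}|^2=v_1^2+v_2^2=1-v_3^2$ and, after factoring, $|(\bm{v}\times\bm{\nu})\times\bm{v}|^2=v_3^2(v_1^2+v_2^2)+(1-v_3^2)^2=(1-v_3^2)$. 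Dividing each vector by $\sqrt{1-v_3^2}$ yields the two normalized expressions asserted in the lemma.

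Next I would confirm that $\bigl\{\,(\bm{v}\times\bm{\nu})/\sqrt{1-v_3^2},\ \bm{v},\ ((\bm{v}\times\bm{\nu})\times\bm{v})/\sqrt{1-v_3^2}\,\bigr\}$ is an orthonormal basis of $\mathbb{R}^3$. Orthogonality is automatic from the cross-product construction: each double cross product is perpendicular to both of its factors, and $\bm{v}\times\bm{\nu}\perp\bm{v}$. Unit length follows from the norm computations in the previous step, and $\bm{v}$ itself is a unit vector since $\bm{v}\in S^2$. This is precisely the orthonormal frame introduced just before the lemma.

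Finally, because the frame is orthonormal, the coordinates $s,t,r$ of the point $g(x,y)=(x,y,f(x,y))$ in this frame are just its inner products with the three basis vectors: $s=g(x,y)\cdot(\bm{v}\times\bm{\nu})/\sqrt{1-v_3^2}$, $t=g(x,y)\cdot\bm{v}$, and $r=g(x,y)\cdot((\bm{v}\times\bm{\nu})\times\bm{v})/\sqrt{1-v_3^2}$. Each of these is a one-line dot product that reproduces the stated formulas. Since the argument is entirely routine vector algebra, there is no genuine conceptual obstacle; the only points requiring care are the consistent use of the unit-sphere relation to collapse the two norms, and the implicit assumption $v_3\neq\pm1$ (equivalently $\bm{v}\neq\pm\bm{\nu}$), which is needed so that $\sqrt{1-v_3^2}\neq0$ and the frame is well defined.
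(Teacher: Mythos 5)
Your proof is correct and follows essentially the same route as the paper: expand $g(x,y)=(x,y,f(x,y))$ in the orthonormal frame and read off $s,t,r$ as the three dot products, with your explicit verification of the cross-product formulas, the norm computations, and orthonormality (plus the caveat $v_3\neq\pm1$) being details the paper's proof leaves implicit. One point worth flagging: your (correct) computation gives $r=\frac{-v_1v_3x-v_2v_3y+(1-v_3^2)f(x,y)}{\sqrt{1-v_3^2}}$, whereas the lemma as printed has $-v_1v_2x$ in the first term; this is evidently a typo in the paper, since its own proof dots $(x,y,f(x,y))$ with $(-v_1v_3,-v_2v_3,1-v_3^2)/\sqrt{1-v_3^2}$, so the formula your argument produces is the right one rather than the one literally stated.
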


    \begin{proof}
        Since $g = (g \cdot (\bm{\nu} \times \bm{v})) \bm{\nu} \times \bm{v} + (g \cdot \bm{v}) \bm{v} + (g \cdot ((\bm{v} \times \bm{\nu}) \times \bm{v})) (\bm{v} \times \bm{\nu}) \times \bm{v}$, we conclude that
        % $
        % The scalars $s,t,r$ are obtained to calculate the scalar product of the vectors of frame $\bm{v} \times \bm{\nu}, \bm{v}, (\bm{v} \times \bm{\nu}) \times \bm{v}$ and $(x,y,f(x,y))$.

        $$
        \begin{array}{ccccccl}
             s &= &\vecTri{x}{y}{f(x,y)} &\cdot &\frac{1}{\sqrt{1-v^2_3}} \vecTri{v_2}{-v_1}{0} & = &\frac{v_2x-v_1y}{\sqrt{1-v_3^2}},\\
             t &= &\vecTri{x}{y}{f(x,y)} &\cdot &\vecTri{v_1}{v_2}{v_3} & = &v_1x + v_2y + v_3f(x,y),\\
             r &= &\vecTri{x}{y}{f(x,y)} &\cdot &\frac{1}{\sqrt{1-v^2_3}} \vecTri{-v_1v_3}{-v_2v_3}{1-v_3^2} & = &\frac{-v_1v_2x - v_2v_3y + (1-v_3^2)f(x,y)}{\sqrt{1-v_3^2}}.\\
        \end{array}
        $$
    \end{proof}
    
% ----Versality of the Rotation Unfolding----------------------

\section{Versality of the Rotation Unfolding}\label{S:Ver}

    In this section, we consider algebraic conditions when the rotation unfolding is versal. It is when $\theta (F)$ is spanned by $tF(\theta_2)$, $\omega F(\theta_3)$, and $V_R$, where $F = (x,y^2,f(x,y))$, $R$ is the rotation unfolding of $F$ and $V_R:= \langle \frac{\partial R}{\partial v_1}|_{\mathbb{R}^2 \times \{\bm{e_2}\}}, \frac{\partial R}{\partial v_3}|_{\mathbb{R}^n \times \{\bm{e_2}\}} \rangle _{\mathbb{R}}$.

    \begin{lem}     % Calcuration of V_F
        $$ V_{R} = \left\langle 
                \left( \begin{array}{c}
                    -y \\
                    2xy \\
                    0
                \end{array} \right),
                \left( \begin{array}{c}
                    0 \\
                    2yf(x,y) \\
                    -y
                \end{array} \right)
        \right\rangle_{\mathbb{R}}.$$
    \end{lem}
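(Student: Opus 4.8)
The plan is to differentiate the explicit expression $R=s\,(\bm v\times\bm\nu)+t^2\,\bm v+r\,(\bm v\times\bm\nu)\times\bm v$ from the preceding lemma directly in the two unfolding directions. I parametrise $S^2$ near $\bm e_2=(0,1,0)$ by $(v_1,v_3)$, so that $v_2=\sqrt{1-v_1^2-v_3^2}$ is determined and $\bm e_2$ is the point $(v_1,v_3)=(0,0)$; a quick check confirms $R|_{\bm e_2}=F$. The whole evaluation hinges on one observation: at $\bm e_2$ we have $v_1=v_3=0$, so $\partial v_2/\partial v_1$, $\partial v_2/\partial v_3$ and the first derivatives of the normalising factor $1/\sqrt{1-v_3^2}$ all vanish. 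Hence every product-rule term in which a $v_1$- or $v_3$-derivative falls on $v_2$ or on $1/\sqrt{1-v_3^2}$ drops out, and only a handful of terms survive.

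Writing $\bm a=\bm v\times\bm\nu$, $\bm b=\bm v$, $\bm c=(\bm v\times\bm\nu)\times\bm v$, I would first tabulate, at $\bm e_2$, the values $\bm a=(1,0,0)$, $\bm b=(0,1,0)$, $\bm c=(0,0,1)$ and $(s,t,r)=(x,y,f)$, together with the nonvanishing first derivatives $s_{v_1}=-y$, $t_{v_1}=x$, $r_{v_1}=-x$, $t_{v_3}=f$, $r_{v_3}=-y$ and the frame derivatives $\bm a_{v_1}=(0,-1,0)$, $\bm b_{v_1}=(1,0,0)$, $\bm b_{v_3}=(0,0,1)$, $\bm c_{v_3}=(0,-1,0)$ (with $s_{v_3},\bm a_{v_3},\bm c_{v_1}$ all zero). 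Substituting these into the product rule for $R=s\bm a+t^2\bm b+r\bm c$ yields
\[
\frac{\partial R}{\partial v_1}\Big|_{\bm e_2}=(y^2-y,\ 2xy-x,\ -x),\qquad \frac{\partial R}{\partial v_3}\Big|_{\bm e_2}=(0,\ 2yf-f,\ y^2-y).
\]

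It remains to pass from these literal partials to the asserted generators, and this is the only step with any content. By the versality criterion (Theorem \ref{prop:IV}) the module $V_R$ enters only through the sum $T\mathcal{A}_eF+V_R$, so I may modify each generator by an element of $\omega F(\theta_3)\subseteq T\mathcal{A}_eF$ without affecting anything. Since the components of $F$ are $x$, $y^2$ and $f$, any vector one of whose entries is a function of $x,y^2,f$ (and the others zero) lies in $\omega F(\theta_3)$. The two computed partials differ from $(-y,2xy,0)$ and $(0,2yf,-y)$ by exactly $(y^2,-x,-x)$ and $(0,-f,y^2)$ respectively; each entry of these discrepancies is one of $x$, $y^2$, $f$ up to sign, so both lie in $\omega F(\theta_3)$. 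Subtracting them establishes the stated generators.

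The main obstacle is conceptual rather than computational: the equality asserted in the lemma is not an equality of literal $\mathbb{R}$-spans but of their images in $\theta(F)/T\mathcal{A}_eF$, and the proof must make this explicit so that the subsequent application of the versality criterion is legitimate. The differentiation itself is routine once the vanishing of the $v_2$- and normalisation-derivatives at $\bm e_2$ is recorded; the care needed is in identifying precisely which surviving terms ($-y$, $2xy$, $2yf$, $-y$) cannot be absorbed into $\omega F(\theta_3)$, because they are not functions of $x,y^2,f$ alone.
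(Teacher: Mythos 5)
Your proposal is correct in substance, but it computes a different representative of the family than the paper does, and the difference is worth spelling out. The paper's proof differentiates the \emph{frame components} $(s,t^2,r)$ componentwise --- equivalently, it works with the family $\mathrm{fold}\circ\rho_{\bm{v}}^{-1}\circ g$, where $\rho_{\bm{v}}$ is the rotation carrying the standard basis to the frame $(\bm{v}\times\bm{\nu},\ \bm{v},\ (\bm{v}\times\bm{\nu})\times\bm{v})$ --- so the frame vectors are never differentiated, the partials come out to be exactly $(-y,2xy,0)$ and $(0,2yf,-y)$, and the lemma is a literal equality of $\mathbb{R}$-spans with no absorption step. You instead differentiate the ambient-coordinate expression $s\bm{a}+t^2\bm{b}+r\bm{c}$ (the literal reading of the composition of \eqref{FoldingMap} with $g$), which produces the extra terms $s\bm{a}_{v_i}+t^2\bm{b}_{v_i}+r\bm{c}_{v_i}$, and you then absorb the discrepancies into $\omega F(\theta_3)$. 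That step is valid: $(y^2,-x,-x)$ and $(0,-f,y^2)$ are of the form $\eta\circ F$, and since $V_R$ enters the versality criterion of Theorem \ref{prop:IV} only through the sum $T\mathcal{A}_eF+V_R$, replacing generators by ones congruent modulo $T\mathcal{A}_eF$ changes nothing downstream. The two families differ by left composition with the parameter-dependent rotations $\rho_{\bm{v}}$, hence are isomorphic unfoldings, which is why both computations lead to the same versality conclusions; your version makes explicit a point the paper leaves implicit, at the cost of an extra reduction step.

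One concrete error you inherited from the paper: your value $r_{v_1}|_{\bm{e}_2}=-x$ comes from the misprinted formula $r=\frac{-v_1v_2x-v_2v_3y+(1-v_3^2)f}{\sqrt{1-v_3^2}}$ in the preceding lemma. Since $r$ must equal $g\cdot\frac{(\bm{v}\times\bm{\nu})\times\bm{v}}{\left|(\bm{v}\times\bm{\nu})\times\bm{v}\right|}$ and $(\bm{v}\times\bm{\nu})\times\bm{v}=(-v_1v_3,-v_2v_3,1-v_3^2)$, the first term of the numerator is $-v_1v_3x$ (this is the formula the paper's own computation uses), which gives $r_{v_1}|_{\bm{e}_2}=0$; the correct conjugated partial is therefore $(y^2-y,\,2xy-x,\,0)$ rather than $(y^2-y,\,2xy-x,\,-x)$. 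Fortunately $(0,0,-x)\in\omega F(\theta_3)$ as well, so your absorption argument swallows this discrepancy too and your conclusion stands, but you should correct the tabulated value rather than rely on that accident.
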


    \begin{proof}
        The generators of the subspace $V_R$ can be obtained by differentiating $R$ by each parameter and then substituting $\bm{v}$ by $\bm{e}_2=(0,1,0)$.
        
        $$
        \begin{array}{ccl}
             \left. \pardiff{R}{v_1} \right|_{\bm{v}=\bm{e}_2}  &=  & \left.\vecTri{\frac{-\frac{v_1}{v_2}x-y}{\sqrt{1-v^2_3}}}{2(v_1x+v_2y+v_3f)(x-\frac{v_1}{v_2}y)}{\frac{-v_3x + \frac{v_1}{v_2}v_3y}{\sqrt{1-v^2_3}}}\right|_{\bm{v}=\bm{e}_2}  \\
             &= & \left. \vecTri{-\frac{1}{\sqrt{1-v^2_3}} \left\{ \frac{v_1}{v_2}x + y \right\}}{2v_2 (\frac{v_1}{v_2}x + y + \frac{v_3}{v_2}f)(x-\frac{v_1}{v_2}y)}{-\frac{v_2}{\sqrt{1-v^2_3}}\frac{v_2}{v_2}(x - \frac{v_1}{v_2}y)}\right|_{\bm{v}=\bm{e}_2}\\
             &= & \vecTri{-y}{2xy}{0},\\
        \end{array}
        $$
        $$
        \begin{array}{ccl}
             \left.\pardiff{R}{v_3}\right|_{\bm{v}=\bm{e}_2}  &=  & \left. \vecTri{\frac{1}{1-v^2_3}(-\frac{v_3}{v_2}\sqrt{1-v^2_3}x - (v_2x - v_1y)(-\frac{v_3}{\sqrt{1-v^2_3}}))}{2(v_1x+v_2y+v_3f)(-\frac{v_1}{v_2}y+f)}{-\frac{v_2}{\sqrt{1-v^2_3}^3}x-\frac{v_2^2-v_3^2+v_3^4}{v_2\sqrt{1-v^2_3}^3}y-f\frac{v_3}{\sqrt{1-v^2_3}}}\right|_{\bm{v}=\bm{e}_2} \\
             & =    & \left.\vecTri{-\frac{v_1^2}{\sqrt{1-v^2_3}^3}\frac{v_1}{v_2}\frac{v_3}{v_2}(\frac{v_1}{v_2}x + y)}{2v_2 (\frac{v_1}{v_2}x + y + \frac{v_3}{v_2}f)(-\frac{v_1}{v_2}y+f)}{-\frac{v_2}{\sqrt{1-v^2_3}^3}\left[\frac{v_1}{v_2}x + \{ 1- (1-v^2_3) (\frac{v_3}{v_2})^2\}y - v_3(1-v^2_3)\frac{v_3}{v_2}f \right]}\right|_{\bm{v}=\bm{e}_2}\\
             & = & \vecTri{0}{2yf}{-y}.
        \end{array}
        $$
    \end{proof}

    The next proposition is for an algebraic condition for the rotation unfolding to be versal.

    \begin{prop}\label{prop:ACMThm}     % Algebric condition of Main Theorem
        Let $f(x,y) = \frac{1}{2} (k_1 x^2 + k_2 y^2) + \sum^m_{i+j \geq 3} \frac{1}{i!j!}a_{ij} x^i y^j + O(m+1)$ and $R$ be the rotation unfolding of the folding map $F$.\\
        (1) If $F$ is $S_0$ or $S_1$ singularity at $\bm{0}$, $R$ is always versal.\\
        (2) If $F$ is $S_2$ singularity at $\bm{0}$, $R$ is versal if and only if $k_1 - k_2 \not =0$.\\ 
        (3) If $F$ is $B_2$ singularity at $\bm{0}$, $R$ is versal if and only if $a_{13}(k_1-k_2) - 3a_{21} a_{12} \not = 0$
    \end{prop}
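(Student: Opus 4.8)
The plan is to invoke the versality criterion of Theorem~\ref{prop:IV}: the unfolding $R$ is versal if and only if the two generators of $V_R$ found above span the quotient $\theta(F)/T\mathcal{A}_eF$, a real vector space whose dimension equals $\mathrm{codim}_{\mathcal{A}_e}F$, namely $0,1,2,2$ in the cases $S_0,S_1,S_2,B_2$ by Theorem~\ref{prop:MC}. The case $S_0$ is trivial since the quotient vanishes. For the remaining cases I would first strip the first two coordinates off the generators $w_1=(-y,2xy,0)$ and $w_2=(0,2yf,-y)$ modulo $tF(\theta_2)$. Since $F_x=(1,0,f_x)$ and $F_y=(0,2y,f_y)$ lie in $T\mathcal{A}_eF$, subtracting $-yF_x+xF_y$ from $w_1$ and subtracting $fF_y$ from $w_2$ yields the clean congruences
\[
w_1\equiv(0,0,\,yf_x-xf_y),\qquad w_2\equiv(0,0,\,-y-ff_y)\pmod{T\mathcal{A}_eF}.
\]
Both representatives now live in the $e_3$-slot, where the classification provides the miniversal directions: $(0,0,y)$ for $S_1$; $(0,0,y)$ and $(0,0,xy)$ for $S_2$; and $(0,0,y)$ and $(0,0,y^3)$ for $B_2$.

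I would organise the reduction by the weight filtration adapted to each normal form (for $B_2$, $\mathrm{wt}\,x=2,\ \mathrm{wt}\,y=1$), using finite determinacy (Theorem~\ref{prop:det}) to discard terms above the determinacy degree. Expanding $f=\tfrac12(k_1x^2+k_2y^2)+\cdots$ gives $yf_x-xf_y=(k_1-k_2)xy+\cdots$ and $-y-ff_y=-y+\cdots$, so $w_2$ always contributes $-1$ in the $(0,0,y)$ direction while $w_1$ carries no order-one term. For $S_1$ this alone shows $w_2$ spans the one-dimensional quotient, proving (1). For $S_2$ the weights separate the relevant monomials, so that $w_2\equiv-(0,0,y)$ and $w_1\equiv(k_1-k_2)(0,0,xy)$ up to terms that vanish in the quotient; the transition determinant is then a nonzero multiple of $k_1-k_2$, and $R$ is versal iff $k_1\neq k_2$, proving (2).

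The substantive case is $B_2$, and this is where I expect the real work. Here the weight grading makes $xy$ and $y^3$ both of weight $3$, so they compete for the single weight-$3$ class of the quotient, and the weight-$3$ part of $w_1$ is $(k_1-k_2)xy+\tfrac12 a_{12}y^3$. The key relation is that $(0,0,f_x)=F_x-(1,0,0)$ lies in $T\mathcal{A}_eF$, and its weight-$3$ leading form is $a_{21}xy+\tfrac16 a_{13}y^3$; since $a_{21}\neq0$ for $B_2$, this forces $xy\equiv-\tfrac{a_{13}}{6a_{21}}y^3$ at weight $3$. Substituting, the $(0,0,y^3)$-coefficient of the reduced $w_1$ becomes a nonzero multiple of $a_{13}(k_1-k_2)-3a_{21}a_{12}$, while $w_2$ still supplies the $(0,0,y)$ direction, so the determinant is a nonzero multiple of this quantity, giving (3). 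Comparison with Lemma~\ref{prop:ridge}(1) identifies it with $(k_1-k_2)$ times the $u$-derivative of $\bm{v}_2\kappa_2$ at the origin, which prepares the geometric reformulation of \S\ref{S:Geo}. The delicate point is justifying that no further weight-$3$ relation occurs: this requires the parity bookkeeping showing that the odd part of $F^*\mathcal{E}_3$, generated by the odd part of $f$, starts only at weight $5$, so the $f_y$-contributions cannot reach weight $3$; hence the weight-$3$ relations are exhausted by the leading form of $(0,0,f_x)$.
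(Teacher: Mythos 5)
Your proposal is correct and arrives at exactly the conditions of the proposition, but by a genuinely different route from the paper. The paper argues by direct surjectivity: it expands all generators of $tF(\theta_2)+\omega F(\theta_3)+V_R$ in monomials up to the determinacy degree, assembles the full coefficient matrix, and checks full rank by marking pivots, reducing to an explicit $3\times 3$ (for $S_2$) resp.\ $4\times 4$ (for $B_2$) submatrix whose determinant gives $k_1-k_2$ resp.\ $3a_{21}a_{12}-a_{13}(k_1-k_2)$. You instead compute intrinsically in the quotient $Q=\theta(F)/T\mathcal{A}_eF$: the identities $w_1-(-yF_x+xF_y)=(0,0,yf_x-xf_y)$ and $w_2-fF_y=(0,0,-y-ff_y)$ (which the paper never writes down) push both unfolding directions into the third slot, and a weight/parity analysis reduces everything to a $2\times 2$ determinant. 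What this buys: since $\dim Q=\operatorname{codim}_{\mathcal{A}_e}F$ is an $\mathcal{A}$-invariant known from the classification, you only need linear independence of two vectors in a space of known dimension, and as a result you never need the $B_2$ normality condition $3a_{21}a_{05}-5a_{13}^2\neq 0$ that the paper must invoke to run its elimination (the dashed pivots in rows $xy^3\bm{e}_3$, $y^5\bm{e}_3$); the paper's method, conversely, is self-contained in that it exhibits every relation explicitly. One step of yours must, however, be made explicit: the claim that $(0,0,y),(0,0,xy)$ (for $S_2$) resp.\ $(0,0,y),(0,0,y^3)$ (for $B_2$) form a basis of $Q$ for the actual $F$ does not follow formally from the classification, because the $\mathcal{A}$-equivalence carrying $F$ to the normal form does not carry monomials to monomials. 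It does follow from your own tools: any even-in-$y$ function in any slot lies in $\omega F(\theta_3)$, so a slot-3 relation $(0,0,h)\in T\mathcal{A}_eF$ forces $h=\phi_1 f_x+\phi_2 f_y+\eta_3\circ F$ with $\phi_1\in F^*\mathcal{E}_3$ and $2y\phi_2\in F^*\mathcal{E}_3$; your parity bookkeeping then shows the odd part of $h$ has weight $\geq 3$ with weight-$3$ part a constant multiple of $a_{21}xy+\frac{a_{13}}{6}y^3$, whence the classes of $y$ and $y^3$ are independent in $Q$ and hence a basis (as $\dim Q=2$), and the same argument shows every odd element of weight $\geq 5$ vanishes in $Q$, which is what licenses discarding the residual terms in your reduction of $w_1$ and $w_2$. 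A small related correction: the ``weight-$3$ leading form'' of $(0,0,f_x)$ should refer to its odd part, after first discarding the even part $k_1x+\frac{a_{12}}{2}y^2+\cdots$ via $\omega F(\theta_3)$; as literally stated, $f_x$ has lower-weight (even) terms.
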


    \begin{proof}
        We use classification result by Proposition.\ref{prop:MC}. We suppose that the folding map $F=(x,y^2,f(x,y))$ is $\mathcal{A}$-equivalent to one of $S_1$, $S_2$ and $B_2$, since these $\mathcal{A}$-codimension are 2 or less. 
        % the coefficient of the function $f$ of the folding map $(x,y^2,f(x,y))$ from Proposition 2.3.
        Since the folding map $F$ is $k$-determined for $\mathcal{A}$, $m_n^{k+1} \theta(F) \subset T \mathcal{A}_e F$ (see Theorem. \ref{prop:IV}.), where $k=3,4$ and $5$, when $F$ is $\mathcal{A}$-equivalent to $S_1,S_2$ and $B_2$, respectively. It is thus enough to perform computations as $k$-jets.
        We investigate the folding map under the restriction given by Proposition.\ref{prop:AC}. 
        
        % This is why we calculate the following, regarding polynomial as truncated over $k$-degree. The rotation unfolding $R$ is $\mathcal{A}$-versal if and only if the following equation holds.
        
        $$
        \begin{array}{l}
           \theta(F) = \ tF(\theta_2) + \omega F(\theta_3) + V_R\\
             \ = \left\langle \vecTri{1}{0}{f_x}, \vecTri{0}{2y}{f_y} \right\rangle_{\mathcal{E}_2} + \left\{ \vecTri{\eta_1(x,y^2,f)}{\eta_2(x,y^2,f)}{\eta_3(x,y^2,f)} \Bigg| \ \eta_i \in \mathcal{E}_3 \right\} + \left\langle \vecTri{-y}{2xy}{0}, \vecTri{0}{2yf}{-y} \right\rangle_{\mathbb{R}}\\
             \ = \left.\left\{ \phi_1\vecTri{1}{0}{f_x} + \phi_2\vecTri{0}{2y}{f_y} + \vecTri{\eta_1(x,y^2,f)}{\eta_2(x,y^2,f)}{\eta_3(x,y^2,f)} + r_1\vecTri{-y}{2xy}{0} + r_2\vecTri{0}{2yf}{-y} \right| 
            \begin{array}{l}
                 \phi_i \in \mathcal{E}_2,\ \eta_j \in \mathcal{E}_3  \\
                 r_1,r_2 \in \mathbb{R}\\
            \end{array}\right\}
        \end{array}
        $$
       Here $\mathcal{E}_n$ is a local ring of germs of a smooth function $f:\mathbb{R}^n \rightarrow \mathbb{R}$. For $\phi_1,\phi_2 \in \mathcal{E}_2$, we consider their Taylor expansions
       $$\sum b_{ij} x^iy^j,\ \ \ \sum c_{ij} x^iy^j$$
       respectively. Let $m_i$ be monomials of order $\leq k$. We represent each term above (as $k$-jets) as follows.

        $$
        \begin{array}{l}
           \phi_1 \vecTri{1}{0}{f_x} = \displaystyle \sum_{i,j,s,t,} \beta_{Ij}b_{st}m_i\bm{e}_j,\ \ \  \phi_2 \vecTri{0}{2y}{f_y} = \displaystyle \sum_{i,j,s,t,} \gamma_{Ij}c_{st}m_i\bm{e}_j\\
           \vecTri{\eta_1(x,y^2,f)}{\eta_2(x,y^2,f)}{\eta_3(x,y^2,f)} = q_1\vecTri{x}{0}{0} + \cdots + q_N \vecTri{0}{0}{f^n} = \displaystyle \sum_{i,j,s} \rho_{Ij}q_{s}m_i\bm{e}_j\\
           r_1 \vecTri{-y}{2xy}{0} + r_2 \vecTri{0}{2yf}{-y} = \displaystyle \sum_{i,j,s} \nu_{Ij} r_sm_i\bm{e}_j\\
        \end{array}
        $$
        where $n$ and $N$ are suitable natural numbers. If the rotation unfolding $R$ is versal, then there exists a $b_{st}, c_{st}, q_s$ and $v_s \in \mathbb{R}$ such that $\xi = \sum \beta_{Ij}b_{st}m_i\bm{e}_j + \sum \gamma_{Ij}c_{st}m_i\bm{e}_j + \sum \rho_{Ij}q_{s}m_i\bm{e}_j + \sum \nu_{Ij} r_sm_i\bm{e}_j$ for any $\xi \in \theta(g)$. Therefore, $R$ is versal if and only if the following matrix is of full rank.

        $$
        \begin{array}{c||ccc|ccc|ccc|ccc}
            & b_{00} &\cdots &b_{0k} &c_{00} &\cdots &c_{0k} &q_1 &q_2 &q_3 &r_1 & &r_2 \\\hline \hline
            
            m_1 \bm{e}_1 & & & & & & & & & & & \\
            \vdots & &\mbox{\Large{$B_1$} } & & &\mbox{\Large{$\Gamma_1$} } & & &\mbox{\Large{$\Theta_1$} } & & &\mbox{\Large{$P_1$}} & \\
            m_n \bm{e}_1 & & & & & & & & & & & \\ \hline
            
            m_1 \bm{e}_2 & & & & & & & & & & & \\
            \vdots & &\mbox{\Large{$B_2$} } & & &\mbox{\Large{$\Gamma_2$} } & & &\mbox{\Large{$\Theta_2$} } & & &\mbox{\Large{$P_2$} } & \\
            m_n \bm{e}_2 & & & & & & & & & & & \\  \hline
            
            m_1 \bm{e}_3 & & & & & & & & & & & \\
            \vdots & &\mbox{\Large{$B_2$} } & & &\mbox{\Large{$\Gamma_2$} } & & &\mbox{\Large{$\Theta_2$} } & & &\mbox{\Large{$P_2$} } & \\
            m_n \bm{e}_3& & & & & & & & & & & \\
        \end{array}
        $$
        Here, $B_j$ , $\Gamma_j$ , $\Theta_j$ and $P_j$ are the matrix $(\beta_{I_j} )_I$, $(\gamma_{I_j} )_I$, $(\theta_{I_j} )_I$, $(\rho_{I_j} )_I$, respectively. 
        
        % We consider what the monomial $m_i$ in the column of the above matrix is.

        We are going to describe the condition that the matrix above is of full rank in terms of $k_i$ and $a_{ij}$.
        For all $i,j \in \mathbb{N}$ a monomial $x^iy^{2j}\bm{e}_s$ is always in $\omega F(\theta_3)$, and we can perform computation modulo $\left \langle x^iy^{2j} \bm{e}_s \right \rangle_{\mathcal{E}_2}$. Furthermore, we need to consider whether $x^iy^{2j}f^l \bm{e}_s \in \omega F(\theta _3)$ represents by an element represented by 0, a monomial or a polynomial which is not a monomial, in $\theta (F) / \langle x^iy^{2j} \bm{e}_s \rangle_{\mathbb{R}} + \langle \langle x,y \rangle_{\mathcal{E}_2} ^{k+1} \rangle_{\mathbb{R}}$. If it is 0, there is nothing to consider, but if it is represented by a monomial, we perform computation modulo $\mathbb{R}$-module generated by it. 
        % In addition, if it is polynomial, we need to calculate the matrix contained $x^iy^{2j+1}f^k$ as a monomial $m_i$.
        
        \underline{(1) About $S_1$ singularity} 
        % If the rotation unfolding $R$ is equivalence to $S_0$, it is versal trivially because $S_0$ singularity is stable, so we deal here with the case of $S_1$ singularity. 
        Observe that $xf \in \omega F(\theta_3)$ is 0 and $f = \frac{1}{6}(3a_{21} x^2y + a_{03}y^3) \in \omega F(\theta_3)$ is a polynomial which is not a monomial. The matrix is shown below removing the blank rows and columns, and removing columns corresponding to the monomials above spanned by some elements $x^iy^{2j}f^l \bm{e}_s \in \omega F(\theta_3)$. We mark several non-zero components as candidates of pivot.

        $$
        \begin{array}{c||c@{\hskip2pt}c@{\hskip2pt}c@{\hskip2pt}c@{\hskip2pt}c@{\hskip2pt}c|c@{\hskip2pt}c@{\hskip2pt}c@{\hskip2pt}c@{\hskip2pt}c@{\hskip2pt}c|c@{\hskip2pt}c@{\hskip2pt}c|c@{\hskip2pt}c}
                &b_{00} &b_{10} &b_{01} &b_{11} &b_{21} &b_{03} &c_{00} &c_{10} &c_{01} &c_{20} &c_{11} &c_{02} &q_1 &q_2 &q_3 &r_1 &r_2 \\ \hline \hline
             y\bm{e}_1      & & &\fbox{1} & & & & & & & & & & & & &-1 & \\ 
             xy\bm{e}_1     & & & &\fbox{$1$} & & & & & & & & & & & & & \\ 
             x^2y\bm{e}_1   & & & & &\fbox{$1$} & & & & & & & &\frac{a_{21}}{2} & & & & \\ 
             y^3\bm{e}_1    & & & & & &\fbox{$1$} & & & & & & &\frac{a_{03}}{6} & & & & \\ \hline
             y\bm{e}_2      & & & & & & &\fbox{$2$} & & & & & & & & & & \\ 
             xy\bm{e}_2     & & & & & & & &\fbox{$2$} & & & & & & & &2 & \\ 
             x^2y\bm{e}_2   & & & & & & & & & & &\fbox{$2$} & & &\frac{a_{21}}{2} & & &k_1 \\ 
             y^3\bm{e}_2    & & & & & & & & & & & &2 & &\fbox{$\frac{a_{03}}{6}$} & & &k_2 \\ \hline
             y\bm{e}_3      & & & & & & &k_2 & & & & & & & & & &\fbox{$-1$} \\ 
             xy\bm{e}_3     &\fbox{$a_{21}$} & &k_1 & & & &a_{12} &k_2 & & & & & & & & & \\ 
             x^2y\bm{e}_3   &\frac{a_{31}}{2} &\fbox{$a_{21}$} &\frac{a_{30}}{2} &k_1 & & &\frac{a_{22}}{2} &a_{12} &\frac{a_{21}}{2} &k_2 & & & & &\frac{a_{21}}{2} & & \\ 
             y^3\bm{e}_3    &\frac{a_{13}}{6} & &\frac{a_{12}}{2} & & & &\frac{a_{04}}{6} & &\fbox{$\frac{a_{03}}{6}$} & & &k_2 & & &
             \frac{a_{03}}{2} & & \\ 
        \end{array}
        $$

        This matrix is of full rank since there are marked elements in all rows. Thus $R$ is versal if the folding map $F$ is equivalent to $S_1$.
        \rightline{$\square$}\\
        
        \underline{(2) About $S_2$ singularity} Observe that $x^2f,y^2f,f^2 \in \omega F(\theta_3)$ is 0, $xf = \frac{a_{03}}{6}xy^3 \in \omega F(\theta_3)$ is a monomial, and $f = \frac{a_{03}}{6}y^3 + \frac{a_{31}}{6} x^3y$ is a polynomial which is not a monomial. The matrix is shown below removing the blank rows and columns, and removing columns corresponding to the monomials above spanned by some elements $x^iy^{2j}f^l \bm{e}_s \in \omega F(\theta_3)$. We mark several non-zero components as candidates of pivot.

        $$
        \begin{array}{c||c@{\hskip2pt}c@{\hskip2pt}c@{\hskip2pt}c@{\hskip2pt}c@{\hskip2pt}c@{\hskip2pt}c|c@{\hskip2pt}c@{\hskip2pt}c@{\hskip2pt}c@{\hskip2pt}c@{\hskip2pt}c@{\hskip2pt}c|c@{\hskip2pt}c@{\hskip2pt}c|c@{\hskip2pt}c}
             & b_{00} & b_{10} & b_{01} & b_{11} & b_{21} & b_{03} & b_{31} & c_{00} & c_{10} & c_{01} & c_{20} & c_{11} & c_{02} & c_{30} & q_1 & q_2 & q_3 & r_1 & r_2 \\ \hline \hline
             y\bm{e}_1      & & &1 & & & &   & & & & & & &   & & &    &-1 & \\ 
             xy\bm{e}_1     & & & &\fbox{$1$} & & &   & & & & & & &   & & &    & & \\ 
             x^2y\bm{e}_1   & & & & &\fbox{$1$} & &   & & & & & & &   & & &    & & \\ 
             y^3\bm{e}_1    & & & & & &\fbox{$1$} &   & & & & & & &   &\frac{a_{03}}{6} & &    & & \\ 
             x^3y\bm{e}_1   & & & & & & &\fbox{$1$}   & & & & & & &   &\frac{a_{31}}{6} & &    & & \\ \hline
             
             y\bm{e}_2      & & & & & & &   &\fbox{$2$} & & & & & &   & & &    & & \\ 
             xy\bm{e}_2     & & & & & & &   & &2 & & & & &   & & &    &2 & \\ 
             x^2y\bm{e}_2   & & & & & & &   & & & & &\fbox{$2$} & &   & & &    & &\frac{k_1}{2}  \\ 
             y^3\bm{e}_2    & & & & & & &   & & & & & &\fbox{$2$} &   & &\frac{a_{03}}{6} &    & &\frac{k_2}{2}  \\ 
             x^3y\bm{e}_2   & & & & & & &   & & & & & & &\fbox{$2$}   & &\frac{a_{31}}{6} &    & &\frac{a_{30}}{6} \\ \hline
             
             y\bm{e}_3      & & & & & & &   &k_1 & & & & & &   & & &    & &\fbox{$-1$}  \\ 
             xy\bm{e}_3     & & &k_1 & & & &   &a_{12} &k_2 & & & & &   & & &    & &  \\ 
             x^2y\bm{e}_3   &\fbox{$\frac{a_{31}}{2}$} & &\frac{a_{30}}{2} &k_1 & & &   &\frac{a_{22}}{2} &a_{21} & &k_2 & & &   & & &    & &  \\ 
             y^3\bm{e}_3    &\frac{a_{13}}{6} & &\frac{a_{12}}{2} & & & &   &\frac{a_{04}}{6} & &\frac{a_{03}}{2}& & &k_2 &   & & &\fbox{$\frac{a_{03}}{6}$}    & &  \\ 
             x^3y\bm{e}_3   &\frac{a_{41}}{6} &\fbox{$\frac{a_{31}}{2}$} &\frac{a_{40}}{6} &\frac{a_{30}}{2} &k_1 & &   &\frac{a_{32}}{6} &\frac{a_{22}}{2} &\frac{a_{31}}{6} &a_{12} & & &k_2   & & &\frac{a_{31}}{6}    & & \\ 
        \end{array}
        $$

        The matrix is of full rank if and only if the following submatrix is of full rank because these elements are no mark.

        $$
        \begin{array}{c||c|c|c}
              &b_{00} &c_{10} &r_1 \\ \hline \hline
             y\bm{e}_1  &1    &   &-1 \\\hline
             xy\bm{e}_2 &    &2   &2 \\\hline
             xy\bm{e}_3 &k_1    &k_2   &
        \end{array}
        $$

        Therefore, the rotation unfolding $R$ is versal if only if this submatrix is of full rank, that is $(-k_1+k_2) \not = 0$.\\
        \rightline{$\square$}\\

        \underline{(3) About $B_2$ singularity} Observe that $x^3f, xy^2f, f^2 \in \omega F(\theta_3)$ is 0, $x^2f=\frac{a_{21}}{2}x^4y, \  y^2f=\frac{a_{21}}{2}x^2y^3, \ xf=\frac{a_{21}}{2}x^3y \in \omega F(\theta_3)$ is a monomial, and $f = \frac{a_{21}}{2} x^2y + \frac{a_{13}}{6} xy^3 + \frac{a_{05}}{120}y^5$ is a polynomial which is not a monomial.  The matrix is shown below removing the blank rows and columns, and removing columns corresponding to the monomials above spanned by some elements $x^iy^{2j}f^l \bm{e}_s \in \omega F(\theta_3)$. We mark several non-zero components as candidates of pivot.
        
        $$
        \begin{array}{c||c@{\hskip2pt}c@{\hskip2pt}c@{\hskip2pt}c@{\hskip2pt}c@{\hskip2pt}c@{\hskip2pt}c@{\hskip2pt}c@{\hskip2pt}c|c@{\hskip2pt}c@{\hskip2pt}c@{\hskip2pt}c@{\hskip2pt}c@{\hskip2pt}c@{\hskip2pt}c|c@{\hskip2pt}c@{\hskip2pt}c|c@{\hskip2pt}c}
             & b_{00} & b_{10} & b_{01} & b_{11} & b_{02} & b_{21} & b_{03} & b_{13} & b_{05} & c_{00} & c_{10} & c_{01} & c_{20} & c_{02} & c_{12} & c_{04} & q_1 & q_2 & q_3 & r_1 & r_2 \\ \hline \hline
             y\bm{e}_1      & & &1 & & & & & &   & & & & & & &   & & &    &-1 & \\ 
             xy\bm{e}_1     & & & &\fbox{$1$} & & & & &   & & & & & & &   & & &    & & \\ 
             x^2y\bm{e}_1   & & & & & &\fbox{$1$} & & &   & & & & & & &   &\frac{a_{21}}{2} & &    & & \\ 
             y^3\bm{e}_1    & & & & & & &\fbox{$1$} & &   & & & & & & &   & & &    & & \\ 
             xy^3\bm{e}_1   & & & & & & & &\fbox{$1$} &   & & & & & & &   &\frac{a_{13}}{6} & &    & & \\
             y^5\bm{e}_1    & & & & & & & & &\fbox{$1$}   & & & & & & &   &\frac{a_{05}}{120} & &    & & \\\hline
             
             y\bm{e}_2      & & & & & & & & &   &\fbox{$2$} & & & & & &   & & &    & & \\ 
             xy\bm{e}_2     & & & & & & & & &   & &2 & & & & &   & & &    &2 & \\ 
             x^2y\bm{e}_2   & & & & & & & & &   & & & &\fbox{$2$} & & &   & &\frac{a_{21}}{2} &    & &\frac{k_1}{2}  \\ 
             y^3\bm{e}_2    & & & & & & & & &   & & & & &\fbox{$2$} & &   & & &    & &\frac{k_2}{2}  \\ 
             xy^3\bm{e}_2   & & & & & & & & &   & & & & & &\fbox{$2$} &   & &\frac{a_{13}}{6} &    & &\frac{a_{21}}{2} \\
             y^5\bm{e}_2    & & & & & & & & &   & & & & & & &\fbox{$2$}   & &\frac{a_{05}}{120} &    & &\frac{a_{04}}{6} \\\hline
             
             y\bm{e}_3      & & & & & & & & &   &k_2 & & & & & &   & & &    & &\fbox{$-1$}  \\ 
             xy\bm{e}_3     &a_{21} & &k_1 & & & & & &   &a_{12} &k_2 & & & & &   & & &    & &  \\ 
             x^2y\bm{e}_3   &\frac{a_{31}}{2} &\fbox{$a_{21}$} &\frac{a_{30}}{2} &k_1 & & & & &   &\frac{a_{22}}{2} &a_{12} &\frac{a_{21}}{2} &k_2 & & &   & & &\frac{a_{21}}{2}    & &  \\ 
             y^3\bm{e}_3    &\frac{a_{13}}{6} & &\frac{a_{12}}{2} & & & & & &   &\frac{a_{04}}{6} & & & &k_2 & &   & & &    & &  \\ 
             xy^3\bm{e}_3   &\frac{a_{23}}{6} &\frac{a_{13}}{6} &\frac{a_{22}}{2} &\frac{a_{12}}{2} &\dbox{$a_{21}$} & &k_1 & &   &\frac{a_{14}}{6} &\frac{a_{04}}{6} &\frac{a_{13}}{2} & &a_{12} &k_2 &   & & &\dbox{$\frac{a_{13}}{6}$}    & & \\
             y^5\bm{e}_3    &\frac{a_{15}}{120} & &\frac{a_{14}}{24} & &\dbox{$\frac{a_{13}}{6}$} & &\frac{a_{12}}{2} & &   &\frac{a_{06}}{12} & &\frac{a_{05}}{24} & &\frac{a_{04}}{6} & &k_2   & & &\dbox{$\frac{a_{05}}{120}$}    & & \\
        \end{array}
        $$

        The dashed elements become trivial alternately with respect to rows because of $3a_{21}a_{05}-5a_{13}^2\not=0$. The matrix is of full rank if and only if the following submatrix is of full rank.

        $$
        \begin{array}{c||cc|c|c}
             &b_{00} &b_{01} &c_{10} &r_1 \\\hline \hline
             y\bm{e}_1      & &1 & &-1  \\\hline
             xy\bm{e}_2     & & &2 &2   \\\hline
             xy\bm{e}_3     &a_{21} &k_1 &k_2 & \\
             y^3\bm{e}_3    &\frac{a_{13}}{6} &\frac{a_{12}}{2} & & \\
        \end{array}
        $$
        
        Therefore, the rotation unfolding $R$ is versal if only if this matrix is of full-rank, that is $3a_{21} a_{12} - a_{13} (k_{1}-k_{2})\not = 0$.
        \end{proof}

% ----Geometric Interpretation---------------------

    \section{Geometric Interpretation}\label{S:Geo}

    \subsection{Geometric condition of versality}
        We interpret the condition of the coefficients for rotation unfolding to be versal in terms of the geometrical properties of the surface $(x,y,f(x,y))$.
        
        \begin{thm}     % geometric conditon of versality
            Let $f(x,y) = \frac{1}{2} (k_1 x^2 + k_2 y^2) + \sum^m_{i+j \geq 3} \frac{1}{i!j!}a_{ij} x^i y^j+O(m+1)$, and $R$ be the rotation unfolding of the folding map $F$.\\
            (1) If $F$ is $S_0$ or $S_1$ singularity at $\bm{0}$, $R$ is always versal.\\
            (2) If $F$ is $S_2$ singularity at $\bm{0}$, $R$ is versal if and only if the origin is not umbilic.\\ 
            (3) If $F$ is $B_2$ singularity at $\bm{0}$, $R$ is versal if and only if the origin is not umbilic and v-ridge line is transverse to the plane $\Pi$, or $D_4$ type umbilic.
        \end{thm}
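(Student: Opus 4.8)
The plan is to read off the three algebraic conditions of Proposition~\ref{prop:ACMThm} and translate each into the vocabulary of ridges, subparabolic lines and umbilics supplied by Lemma~2.1 of \cite{F}, Lemma~\ref{prop:ridge} and Proposition~\ref{prop:umb}. Parts (1) and (2) need almost nothing: in (1) the geometric statement is literally the algebraic one, and in (2) the condition $k_1-k_2\ne0$ of Proposition~\ref{prop:ACMThm} is, by definition of an umbilic (the hypothesis $k_1\ne k_2$ of Lemma~2.1 of \cite{F}), exactly the assertion that the origin is not umbilic. So the whole content lies in part (3), where I must interpret $a_{13}(k_1-k_2)-3a_{21}a_{12}\ne0$ geometrically, and the argument splits according to whether the origin is umbilic.

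For the non-umbilic $B_2$ case I would proceed as follows. Being $B_2$ forces $a_{03}=0$ (Proposition~\ref{prop:AC}), so by Lemma~2.1 of \cite{F} the origin is a $\bm{v}_2$-ridge point and a $\bm{v}_2$-ridge line passes through it. That line is the zero set of $\bm{v}_2\kappa_2$, whose $1$-jet is given by Lemma~\ref{prop:ridge}(1); with $a_{03}=0$ it reduces to $\frac{3a_{21}a_{12}+a_{13}(k_2-k_1)}{k_2-k_1}\,u+(\cdots)\,v+O(2)=0$. Since the reflection plane is $\Pi=\{y=0\}$, it cuts the surface along the $u$-axis $\{v=0\}$, and the ridge line is transverse to $\Pi$ exactly when the coefficient of $u$ above is nonzero, i.e. when $3a_{21}a_{12}+a_{13}(k_2-k_1)\ne0$. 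This is precisely $a_{13}(k_1-k_2)-3a_{21}a_{12}\ne0$, the condition of Proposition~\ref{prop:ACMThm}(3), which settles the non-umbilic half.

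For the umbilic $B_2$ case, $k_1=k_2$ makes the versality condition collapse to $3a_{21}a_{12}\ne0$, hence (using $a_{21}\ne0$ from the $B_2$ list) to $a_{12}\ne0$. Here Lemma~2.1 of \cite{F} is unavailable, so I would pass to the complex normal form $f=\frac{k}{2}z\bar z+c(z)+O(4)$ with $c(z)=\frac16\mathrm{Re}(\alpha z^3+3\beta z^2\bar z)$, and record the dictionary obtained by matching $c$ against the cubic part $\frac{a_{30}}{6}x^3+\frac{a_{21}}{2}x^2y+\frac{a_{12}}{2}xy^2+\frac{a_{03}}{6}y^3$; in particular $a_{03}=\mathrm{Im}(\alpha)-3\,\mathrm{Im}(\beta)$, $a_{21}=-\mathrm{Im}(\alpha+\beta)$ and $a_{12}=\mathrm{Re}(\beta-\alpha)$. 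Dehomogenising $c$ at the direction $y$ shows that $a_{03}=0$ means the folding direction is a root of the cubic $c$ (a ridge direction of the umbilic), while $a_{12}\ne0$ means that this root is simple. I would then use Proposition~\ref{prop:umb}(ii) to identify ``simple folding-ridge direction'' with the umbilic being of type $D_4$.

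The main obstacle is exactly this last identification. Versality only detects that the folding direction is a simple root of $c$, whereas the honest $D_4$ condition (nonvanishing discriminant of the binary cubic $c$, equivalently the criterion of Proposition~\ref{prop:umb}(ii)) demands three distinct root directions; the two agree only after the degenerate possibility of a repeated root \emph{away} from the folding direction is excluded, which is where the $B_2$ relations $a_{03}=0$, $a_{21}\ne0$ and the genericity encoded in Proposition~\ref{prop:umb} must be brought to bear. I expect the delicate step to be reconciling the complex criterion $3\beta\ne-(2\alpha e^{2i\theta}+\bar\alpha e^{-4i\theta})$ with the single algebraic inequality $a_{12}\ne0$ under the constraints $a_{03}=0$, $a_{21}\ne0$; once that bookkeeping is carried out, the three cases assemble into the stated theorem with no further difficulty.
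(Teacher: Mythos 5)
Your parts (1), (2) and the non-umbilic half of (3) are correct and follow the paper's own route: read off the algebraic criteria of Proposition~\ref{prop:ACMThm} and translate them through Lemma~\ref{prop:ridge}. Your transversality computation is in fact the correct version of the paper's: with $a_{03}=0$ the ridge line $\{C_u u + C_v v + O(2)=0\}$ has tangent direction $(-C_v,C_u,0)$ at the origin, which escapes $\Pi=\{y=0\}$ exactly when the coefficient $C_u$ of $u$ is nonzero, and $C_u\neq 0$ is (up to the nonzero factor $k_2-k_1$) the condition $a_{13}(k_1-k_2)-3a_{21}a_{12}\neq 0$ of Proposition~\ref{prop:ACMThm}(3); the paper's proof says ``the coefficient of $v$'', which is a slip, and your reading is the one consistent with both the algebra and the theorem statement.

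The gap is exactly where you flagged it: the umbilic case of (3). Like the paper, you prove that versality is equivalent to $a_{12}\neq 0$, i.e.\ to the folding direction being a \emph{simple} root of the cubic $c$, and you correctly observe that this is weaker than the $D_4$ condition (no repeated root in \emph{any} direction). You then hope the $B_2$ constraints $a_{03}=0$, $a_{21}\neq 0$ exclude a repeated root away from the folding direction; they do not, and no bookkeeping will close this. Take $c$ equal to $x(x-y)^2$, i.e.\ $(a_{30},a_{21},a_{12},a_{03})=(6,-4,2,0)$, with fifth-order terms chosen so that $3a_{21}a_{05}-5a_{13}^2\neq 0$ (say $a_{13}=0$, $a_{05}=1$). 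Then $F$ is $B_2$ by Proposition~\ref{prop:AC}, and $R$ is versal by Proposition~\ref{prop:ACMThm}(3), since at an umbilic the condition reads $-3a_{21}a_{12}=24\neq 0$; yet the umbilic is not of type $D_4$ because $(x-y)^2$ is a repeated factor. So the ``only if'' half of the umbilic clause fails as literally stated. You should know, however, that the paper's own proof suffers from the same defect: its in-proof argument ($a_{12}=a_{03}=0$ forces a repeated root, hence not $D_4$) and its post-proof resultant computation $\mathrm{Res}(a_{12},a_{03})$ only establish the implication ``$D_4$ umbilic $\Rightarrow$ versal'', because the resultant detects a repeated root of $c$ in \emph{some} direction $w$, not in the given folding direction. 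The honest statement, which your argument does prove, is that at an umbilic $R$ is versal if and only if the root of $c$ normal to $\Pi$ is simple; this is implied by, but not equivalent to, the $D_4$ condition.
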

        
        \begin{proof}
            The origin is umbilic if and only if $k_1 - k_2 = 0$, so (2) is trivial by Proposition \ref{prop:ACMThm}. We discuss the statement (3). If the origin is not umbilic, the $v_2$-ridge line is given by the following: (see Lemma \ref{prop:ridge}.)
            $$\bm{v}_2 \kappa_2(u,v) = a_{03} + \frac{1}{k_2-k_1} \{ 3a_{21}a_{12} - a_{13}(k_1-k_2)\}u + \frac{1}{k_2-k_1} \{ 3a_{12}^2 + (a_{04}-3k_2^3)(k_2-k_1) \} v +O(2) = 0.$$
            
            Thus, the $R$ is versal if and only if the coefficient of $v$ is not $0$. That is a ridge line transverse to the plane $\Pi$. If the origin is umbilic, i.e., $k_1-k_2 = 0$, the versality condition of $R$ is able to be rewritten as $3a_{21} a_{12} \not = 0$. In addition, the coefficient $a_{21} \not = 0$ because $R$ is $B_2$ singularity (Proposition \ref{prop:AC}), so we get the condition that $a_{12} \not = 0.$ Then, we represented the function $f(x,y)$ by complex coordinate, and the complex polynomial $c(z)$ denote the its cubic form, i.e., $c(z) = 1/6 (\alpha z^3 + 3\beta z^2\overline{z} + 3\bar{\beta} z \bar{z}^2 + \bar{\alpha}\bar{z}^3)$. We chose $w \in \mathbb{C}$ such that $|w| = 1$, and that
            $$c(wz) = \frac{1}{6}(a_{30}x^3 + 3a_{21} x^2y + 3a_{12} xy^2 + a_{03} y^3).$$

            Then, we get the coefficients of $c(wz)$ as follows:

            $$a_{30} = c(wz) \big|_{z=1}, \ \ \ a_{21} = \pardiff{c(wz)}{y}\big|_{z=1}, \ \ \ a_{12} = \pardiff{c(wz)}{x}\big|_{z=i},\ \ \ a_{03} = c(wz)\big|_{z=i}.$$

            Now we assume $R$ is not versal, i.e., $a_{12} = 0$, and $(x,y^2,f(x,y))$ is $B_2$ singularity, i.e., $a_{03} = 0$. Besides, if the umbilic is not of type $D_4$, $c(wz)=\pardiff{c(wz)}{x}=0$. (see Proposition.\ref{prop:umb})
        \end{proof}
            
            The resultant of $a_{12}$ and $a_{03}$ should be $0$ if there is above $w$.

            $$
            \begin{array}{ccc}
                 a_{12} & = & \frac{i}{6}(\alpha w^3 - \beta w^2 \bar{w} - \bar{\beta} w \bar{w}^2 + \bar{\alpha} \bar{w}^3)\\
                 a_{03} & = & -\frac{1}{6}(\alpha w^3 - 3\beta w^2 \bar{w} + 3\bar{\beta} w \bar{w}^2 - \bar{\alpha} \bar{w}^3)
            \end{array}
            $$

            and the resultant of them is

            $$
            \begin{array}{ccl}
                 \text{Res}(a_{12},a_{03}) &= & (\frac{i}{6})^3\cdot(-\frac{1}{6})^3\times
                 \begin{vmatrix}
                \alpha & -\beta & -\bar{\beta} & \bar{\alpha} & & \\
                & \alpha & -\beta & -\bar{\beta} & \bar{\alpha} & \\
                & & \alpha & -\beta & -\bar{\beta} & \bar{\alpha} \\
                \alpha & -3\beta & 3\bar{\beta} & -\bar{\alpha} & & \\
                & \alpha & -3\beta & 3\bar{\beta} & -\bar{\alpha} & \\
                & & \alpha & -3\beta & 3\bar{\beta} & -\bar{\alpha} \\
                \end{vmatrix} \\ \\
                 & = & \frac{i}{6^6} \cdot |\alpha|^2 \times \left\{|\alpha|^4 -3 |\beta|^4 - 6|\alpha|^2|\beta|^2 + 4(\alpha \bar{\beta}^3 + \bar{\alpha} \beta^3) \right\} \\
            \end{array}
            $$

            On the plane $\alpha=1$, we draw a graph of $\text{Res}(a_{12},a_{03})=0$. (The dashed is a circle of radius 1.) This graph corresponds to an inner hypocycloid in the figure of Proposition \ref{prop:umb}.\\

            \begin{figure}[h!]
                \centering
                \includegraphics{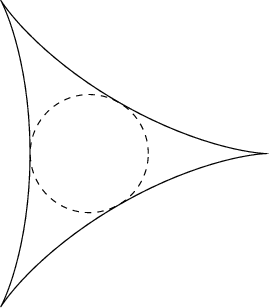}
                % \begin{tikzpicture}
                %     \draw plot[id=curve, raw gnuplot, smooth] function{
                %         f(x,y) = 3*(x**2+y**2)**2 + 54*(x**2+y**2) - 81 - 24*(x**3-3*x*y**2);
                %         set xrange [-4:4];
                %         set yrange [-4:4];
                %         set view 0,0;
                %         set isosample 1000,1000;
                %         set size square;
                %         set cont base;
                %         set cntrparam levels incre 0,0.1,0;
                %         unset surface;
                %         splot f(x,y)
                %         };
                %     \path[draw,dashed,samples=100,domain=0:6.28,variable=\t] plot ({cos(\t r)},{sin(\t r)});
                % \end{tikzpicture}
                \caption{the locus of $\text{Res}(a_{12},a_{03})=0$}
            \end{figure}

    \subsection{Bifurcation Set of the Rotation Unfolding}\label{S:BofR}
        In this section, we discuss bifurcation sets of the rotation unfolding. The rotation unfolding is considered as a subfamily of the folding family defined by Bruce and Wilkinson \cite[$\S 2$]{BW}. The folding family is parameterized by planes in $\mathbb{R}^3$ and the bifurcation set of the folding family is the dual of the focal and symmetry sets, that is, the set of tangent planes of the focal set and symmetry set (see \cite[Proposition 2.3]{BW}). 

        Now we show the bifurcation set of versal unfoldings of $S_1,S_2$, and $B_2$ singularity \cite[Fig.2.]{BW}. The self-tangent part is dashed.

            \begin{figure}[h!]
                \centering
                    \includegraphics{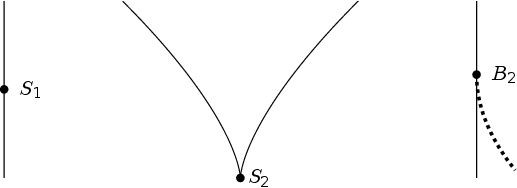}
                \caption{Bifurcation sets}
            \end{figure}

        The assertions above are consequences of the following proposition.

        \begin{prop}
            Let $a \in \mathbb{R}$. The versal unfolding of $S_2$ and $B_2$ are $(x,y^2,y^3 - x^3y + sy + txy)$ and $(x, y^2, y^5 - x^2y + sy + ty^3)$, respectively. In this case, the position of the singularity and the locus of the bifurcation set are as follows.\\
            \ronumi The locus of a mono-germ is $(s,t) = (-2a^3,3a^2)$ at $(a,0)$ in the source of the folding map, and there is no locus of a bi-germ.\\
            \ronumii The locus of a mono-germ is $(s,t) = (0,a)$ at $(0,0)$ in the source of the folding map, and the locus of a bi-germ is $(s,t) = (a^4,-2a^2)$ at $(0,a),(0,-a)$ in the source of the folding map.
        \end{prop}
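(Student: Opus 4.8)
The plan is to realise the bifurcation set of each unfolding as the union of two loci: the \emph{mono-germ locus}, consisting of parameters $(s,t)$ for which the map acquires a singular point more degenerate than a cross-cap, and the \emph{bi-germ locus}, consisting of parameters for which two distinct source points produce a self-tangency. Writing each unfolding as $F_{s,t}(x,y)=(x,y^2,h(x,y))$ with $h=y^3-x^3y+sy+txy$ in case \ronumi and $h=y^5-x^2y+sy+ty^3$ in case \ronumii, I would first record that each is a $2$-parameter unfolding of a germ of $\mathcal{A}_e$-codimension $2$ and confirm versality through Theorem \ref{prop:IV}, namely that the module $V_F$ spanned by the two partials $\partial F_{s,t}/\partial s$ and $\partial F_{s,t}/\partial t$ (restricted to the parameter origin) fills out the quotient $\theta(f_0)/T\mathcal{A}_ef_0$; this is a finite jet computation of the same type as Proposition \ref{prop:ACMThm}.

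For the mono-germ locus I would locate the singular set of $F_{s,t}$. Since the second coordinate is $y^2$, the $2\times 2$ minors of $dF_{s,t}$ reduce to $2y$, $h_y$ and $2yh_x$, so the non-immersive points are exactly $\{y=0,\ h_y(x,0)=0\}$. For a singular point $(x_0,0)$ I would translate it to the origin by $x=x_0+X$, $y=Y$ and read off the Taylor coefficients of $h(x_0+X,Y)$; by the criterion of Proposition \ref{prop:AC} the point is a cross-cap precisely when the coefficient of $XY$ is nonzero, so the mono-germ locus is where that coefficient vanishes. In case \ronumi this gives $t=3a^2$ together with $h_y(a,0)=0$, hence $s=a^3-ta=-2a^3$, at the source point $(a,0)$; in case \ronumii the $XY$-coefficient is $-2x_0$, forcing $x_0=0$ and therefore $s=0$ with $t=a$ free, at the source point $(0,0)$. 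A short check of the surviving coefficients via Proposition \ref{prop:AC} confirms these points are $S_1$ away from $a=0$, degenerating to the central $S_2$, resp.\ $B_2$, at $a=0$.

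For the bi-germ locus I would use the self-tangency condition recalled in the introduction: $F_{s,t}(x_0,y_0)=F_{s,t}(x_0,-y_0)$ with matching tangent images occurs exactly when the odd part $h_o(x,y)=\frac{1}{2}(h(x,y)-h(x,-y))$ satisfies $h_o=\partial_x h_o=\partial_y h_o=0$ at a point with $y_0\neq 0$. In both cases $h$ is odd in $y$, so $h_o=h$, and I would solve this system restricted to $y\neq 0$. In case \ronumi, $\partial_x h_o=0$ forces $t=3x^2$, while subtracting $h_o=0$ from $\partial_y h_o=0$ yields $2y^2=0$, contradicting $y\neq 0$; hence there is no bi-germ. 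In case \ronumii, $\partial_x h_o=-2xy=0$ forces $x=0$, and setting $u=y^2$ reduces the remaining two equations to $u^2+tu+s=0$ and $5u^2+3tu+s=0$, whose difference gives $u=-t/2$; writing $u=a^2$ yields $t=-2a^2$ and $s=a^4$, with the two source points $(0,\pm a)$.

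The main obstacle is not any single calculation but making sure the two loci together exhaust the bifurcation set and that the source points are attributed correctly. Concretely, I would need to argue that off these curves $F_{s,t}$ is stable --- its only singularities being cross-caps and its only multiple points being transverse double points --- so that no further strata appear, and to confirm that the absence of real solutions with $y\neq 0$ in case \ronumi genuinely rules out self-tangency rather than merely pushing it to complex values. This reduces to the genericity and transversality already implicit in the versality established in the first step, after which the stated parametrizations follow.
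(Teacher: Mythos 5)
Your proposal is correct and takes essentially the same approach as the paper, whose own proof is only an outline: both establish the two families as versal unfoldings, find the mono-germ locus as the parameters where a singular point degenerates beyond a cross-cap (your vanishing $XY$-coefficient condition at a translated singular point is exactly the paper's condition that $\tilde{R}_x\times\tilde{R}_y=0$ acquire a multiple root), and find the bi-germ locus from the self-tangency equations (your odd-part conditions $h_o=\partial_xh_o=\partial_yh_o=0$ are equivalent to the paper's cross-product-of-normals criterion). Your closing worry about exhausting the bifurcation set is easily dispatched, since a folding map $(x,y)\mapsto(x,y^2,h(x,y))$ admits no triple points or other multi-germ strata beyond the $(x,\pm y)$ pairs you consider.
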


        \begin{proof}
            It is easy for this proposition to prove, so we show an outline of the proof. First, we calculate $\small\fracset{\theta(f)}{T\mathcal{A}_ef}$, and versal unfolding is obtained for the folding map to add a linear combination of these generators over $\mathbb{R}$. We denote this versal unfolding by $\tilde{R}$.
            
            We consider the cases when there are points that are unstable points about mono-germ or bi-germ. If mono-germ, we will find the condition for the versal unfolding parameters when $\tilde{R}_x \times \tilde{R}_y=0$ has multiple roots since it has no multiple root generally. If bi-germ, these tangent planes have self-tangency. Thus, the following holds since these are equal to images of the differential map for versal unfolding $R$ at $(x,y)$ and $(x,-y)$.
            
        $$
        \begin{array}{lcc}
            \text{Im } dR_{(x,y)} = \text{Im } dR_{(x,-y)} \\
         
            \ \Leftrightarrow \ \left\langle \tilde{R}_x(x,y), \tilde{R}_y(x,y) \right\rangle_{\mathbb{R}} = \left\langle \tilde{R}_x(x,-y), \tilde{R}_y(x,-y) \right\rangle_{\mathbb{R}}\\
         
            \ \Leftrightarrow \ \left\{ \tilde{R}_x(x,y) \times \tilde{R}_y(x,y) \right\} \times \left\{ \tilde{R}_x(x,-y) \times \tilde{R}_y(x,-y) \right\} = 0 \\
        \end{array}
        $$
        Therefore, the condition is the solution of $(\tilde{R}_x(x,y) \times \tilde{R}_y(x,y))\times(\tilde{R}_x(x,-y) \times \tilde{R}_y(x,-y))=0$.\\
    \end{proof}

        Thus, a point of the bifurcation set of the rotation unfolding corresponds to a tangent plane of the focal/symmetry set of $M$, which contains the origin.
        
        \begin{itemize}
            \item If the folding map $F$ is $S_1$ singularity at the origin, the bifurcation set of the rotation unfolding is a non-singular curve and its point corresponds to a locus of tangent planes of the focal set of the surface $M$, which is passing through the origin.
            
            \item If the folding map $F$ is $S_2$ singularity at the origin and the origin is not umbilic of the surface $M$, the bifurcation set of the rotation unfolding is equivalent to a curve having (2,3)-cusp and its point corresponding to a locus of tangent planes of the focal set of the surface $M$, which is passing through the origin.
            
            \item If the folding map $F$ is $B_2$ singularity at the origin, and the origin is not umbilic of $M$ and $v$-ridge line is transverse to the plane $\Pi$, or $D_4$ type umbilic of M, the bifurcation set of the rotation unfolding is a union of a non-singular curve $C$ and a segment which tangents to $C$ at the origin, and its point corresponds to a locus of tangent planes of the focal set or symmetry set of the surface $M$, which is passing through the origin.
            
        \end{itemize}

\end{document}